\documentclass[11pt]{amsart}

\usepackage{amsmath}
\usepackage{amsfonts}
\usepackage{amssymb}
\usepackage{eucal}
\usepackage[T1]{fontenc}

\textheight      230.0mm
\textwidth       160.0mm
\oddsidemargin     0.0mm
\evensidemargin    0.0mm
\topmargin       -15.0pt
\topsep            0.0pt
\arraycolsep       2.0pt
\parskip           2.0pt

\def\dfrac{\displaystyle \frac}

\newtheorem{theorem}{Theorem}
\newcounter{lemm}
\newtheorem{lemma}[lemm]{Lemma}
\newcounter{prop}
\newtheorem{proposition}[prop]{Proposition}
\newcounter{coro}
\newtheorem{corollary}[coro]{Corollary}

\newcounter{rema}

\newcounter{exam}
\newenvironment{example}{\smallskip\noindent\stepcounter{exam}{\bf Example \arabic{exam}}.}{\qed\smallskip}

\begin{document}
\date{\today}
\vspace*{30mm}
\title{Sewing cells in almost cosymplectic and almost Kenmotsu geometry}
\author{Piotr Dacko}
\email{{\tt piotrdacko{\char64}yahoo.com}}
\footnotetext[2]{2000 Mathematics Subject Classification: 53C15, 53C30, 53C50.}
\footnotetext[3]{Keywords: almost cosymplectic manifolds, almost $\alpha$-Kenmotsu manifolds, nullity conditions, $f$-manifolds}
\begin{abstract}For a finite family of 3-dimensional almost contact metric manifolds with closed 
the structure form $\eta$ is described a construction of an almost contact metric manifold, where the 
members of the family are building blocks - cells. Obtained manifold share many properties of cells. One 
of the more important are nullity conditions. If cells satisfy nullity conditions - then - in the 
case of almost cosymplectic or almost $\alpha$-Kenmotsu manifolds - ``sewed cells'' also satisfies 
nullity condition - but generally with different constants. It is important that even in the case of the 
generalized nullity conditions - ``sewed cells'' are the manifolds which satisfy such conditions provided the 
cells satisfy the generalized nullity conditions. 
\end{abstract}
\maketitle

\section{Introduction}
The recent years  witnessed the very extensive study of the geometry of  almost 
contact metric manifolds. One of the most important results are several classification theorems 
concerning contact metric, almost cosymplectic and almost $\alpha$-Kenmotsu manifolds which satisfy 
the nullity conditions. Non-normal contact metric manifolds are classified finally by E. Boeckx
{\cite{BlKouPap,Boeckx}} up to the equivalence relation defined by the $D$-homoteties. 
The class of almost cosymplectic manifolds was studied by the author and Z. Olszak {\cite{DacXi,DacOlszMu,DacOlszKapMuNu}} -
now almost cosymplectic manifold are classified up to $D$-conformal deformations of the structure. Just 
recently M. A. Pastore, D. Dileo and V. Saltarelli  {\cite{DilPastNulDist,Dileo1}} resolved the problem 
of the classification of an almost Kenmotsu manifolds which satisfy nullity or generalized nullity condition - 
with only one small gap remaining - the local description of generalized $(\kappa,\mu)$-nullity almost Kenmotsu 
manifolds. Successfully just recently V. Saltarell classified 3-dimensional such manifolds. 
 One of the problems in that direction - to consider higher dimensions-  not many examples were known. 
One of the main goal of this paper is to  resolve this problem.

Let $M_1$,\ldots, $M_k$ are almost contact metric 3-dimensional manifolds. On the Cartesian product $M=M_1\times\ldots\times M_k$
 we introduce - quite naturally - an almost metric $f$-structure. We are interested in $2k+1$ submanifolds 
of the manifold $M$ we call - the manifolds of sewed cells. Here the manifolds $M_1$,\ldots, $M_k$ are ``cells'' which are 
``sewed'' together to create just mentioned submanifolds of ``sewed cells'' cf. Section 4.

It appears that the manifold $N$ of sewed cells is enough ``neatly'' embedded into the product $M_1\times\ldots\times M_k$ to share 
many properties of cells. Here of the particular interest are nullity conditions. In the Section 5 we prove that sewed cells 
of almost cosymplectic, almost $\alpha$-Kenmotsu 3-dimensional manifolds is again the manifold of the same class. 
Moreover the nullity conditions are inherited by the sewed cells - even in the case of the generalized nullity conditions. 

However the class of manifolds obtained by the presented method is much wider than almost cosymplectic and almost $\alpha$-Kenmotsu.
Therefore the concept of ``sewed cells''  provides a wide range of new and interesting explicit examples of almost contact metric manifolds.

\section{Preliminaries}

An almost contact metric structure on a manifold $M$ is a quadruple $(\varphi,\xi,\eta,g)$ of the tensor fields, where 
$\varphi$ is an affinor (a $(1,1)$-tensor field), $\xi$ a vector field, $\eta$ a one-form and $g$ 
a Riemannian metric, such that
\begin{eqnarray*}
 \varphi^2 &=& -Id +\eta\otimes\xi, \quad \eta(\xi)=1,  \\
  g(\varphi X,\varphi Y) &=& g(X,Y)-\eta(X)\eta(Y), 
\end{eqnarray*}
here the entries $X$, $Y$ are vector fields. The manifold $M$ endowed with an almost contact metric structure is 
called an almost contact metric manifold. The definition follows that a tensor field $\varPhi(X,Y)=g( X,\varphi Y)$ 
is a totally skew-symmetric, i.e. a 2-form on $M$ called a fundamental form {\cite{Blair}}.

Let $M\times \mathbb{R}$ be the Cartesian product of an almost contact metric manifold 
and the real line. Let define an  almost complex structure $J$ on $M\times\mathbb{R}$ 
\begin{equation*}
 J(X,f\frac{d}{dt}) = (\varphi X-f\xi,\eta(X)\frac{d}{dt}),
\end{equation*}
 with respect to  the canonical splitting $T(M\times\mathbb{R})=TM\oplus T\mathbb{R}$. If $J$ is integrable, i.e. 
 $J$ is a complex structure - $M\times \mathbb{R}$ is a complex manifold, then 
the almost contact metric manifold $M$ is called normal - and the structure is called normal. 

The Nijenhuis torsion tensor field $[\varphi,\varphi]$ of the  structure $\varphi$ is defined by
\begin{equation*}
 [\varphi,\varphi](X,Y) = \varphi^2[ X, Y]+[\varphi X,\varphi Y]-\varphi[\varphi X,Y]-\varphi[X,\varphi Y].
\end{equation*}
The manifold $M$ is normal if and only if {\cite{Blair}}
\begin{equation*}
 [\varphi,\varphi]+2d\eta\otimes\xi=0.
\end{equation*}

An almost contact metric manifold $M$ which satisfies $d\eta=0$ and $d\varPhi=0$, both $\eta$ and the 
fundamental form are closed is called an almost cosymplectic manifold {\cite{GoldYano}}. The manifold $M$ 
is called an almost $\alpha$-Kenmotsu manifold, $\alpha$ a real $\neq 0$ constant, if $d\eta=0$ and $d\varPhi=2\alpha\eta\wedge\varPhi$. In the paper {\cite{Murath}} almost cosymplectic
and almost $\alpha$-Kenmotsu manifolds are studied from the common point of view - and they are called 
almost $\alpha$-cosymplectic manifolds, $\alpha$ arbitrary real constant. A normal almost cosymplectic manifold 
is called cosymplectic, similarly we have $\alpha$-Kenmotsu manifolds. The local structure of cosymplectic,
$\alpha$-Kenmotsu manifolds, now is very well understood. 
An almost cosymplectic manifold is cosymplectic iff $\nabla\varphi=0$, i.e. $\varphi$ is a covariant 
constant with respect to the Levi-Civita connection. From the other hand Goldberg and Yano  {\cite{GoldYano} proved
that the conditions 
\begin{equation*}
 \nabla\varphi=0, \quad R(X,Y)\varphi Z=\varphi R(X,Y)Z,
\end{equation*}
are equivalent on an almost cosymplectic manifold. Therefore an almost cosymplectic manifold is cosymplectic iff the structure $\varphi$ commutes 
with the Riemann curvature.

One of the most important geometric quantities on an almost contact metric manifold are affinors 
\begin{equation*}
 h=\frac{1}{2}\mathcal L_\xi\varphi, \quad h'=h\varphi,
\end{equation*}
 $h$ measure the rate of the change of the tensor $\varphi$ under the flow generated by the vector field $\xi$,
for normal manifolds $h=0$ identically. 

Leaving the explanations of the genesis of the concept, we say that 
an almost contact metric manifold $M$ satisfies a $(\kappa,\mu,\mu')$-nullity condition, or equivalently
that the vector field $\xi$ belongs to a $(\kappa,\mu,\mu')$-nullity distribution $(\kappa,\mu,\mu')\in \mathbb{R}^3$ 
if
\begin{equation*}
\begin{array}[]{rcl}
 R(X,Y)\xi &=& \kappa (\eta(Y)X-\eta(X)Y)+\mu(\eta(Y)hX-\eta(X)hY)+ \\[+4pt]
            &&  +\mu'(\eta(Y)h'X-\eta(X)h'Y).
\end{array}
\end{equation*}
Although in the definition we require $\kappa$, $\mu$, $\mu'$ constant in the case of almost cosymplectic manifolds and 
almost Kenmotsu manifolds it appeared convenient  to extend the definition in the direction where $\kappa$, $\mu$, $\mu'$ 
are some functions. Such weaker conditions are called generalized nullity conditions. The papers 
{\cite{Boeckx,DacXi,DacOlszKapMuNu,DacOlszMu,DilPastNulDist,Dileo1,PastSalt,Salt} concern with 
the classification theorems for particular classes of almost contact metric manifolds.

In this paper by a cell is understood a 3-dimensional almost contact metric manifold
$M^3$, $(\varphi,\xi,\eta,g)$ such that $d\eta=0$. Cells are denoted by $C_1$, $C_2$, etc.

We finish the section with the following 
\begin{proposition}
\label{cellfeat}
 On a cell $C=(M^3,\varphi,\xi,\eta,g)$ the vector field $\xi$ is geodesic $\nabla_\xi\xi=0$ and the structure 
$\varphi$ is $\xi$-parallel
\begin{equation*} 
 \nabla_\xi\varphi =0.
\end{equation*}
\end{proposition}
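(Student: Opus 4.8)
The plan is to derive everything from two elementary features of the almost contact metric structure: first, that compatibility forces $\eta(X)=g(X,\xi)$ — this follows by setting $Y=\xi$ in $g(\varphi X,\varphi Y)=g(X,Y)-\eta(X)\eta(Y)$ and using $\varphi\xi=0$ — and second, that $\varphi$ is skew-adjoint for $g$, whence each $\nabla_X\varphi$ is skew-adjoint as well. The hypothesis defining a cell, $d\eta=0$, enters only through the geodesic property, while the parallelism of $\varphi$ will ultimately rest on the dimension being three.

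For the geodesic property I would first observe that, since $\eta$ is metrically dual to $\xi$ and the Levi-Civita connection is torsion free, $d\eta(X,Y)=g(\nabla_X\xi,Y)-g(\nabla_Y\xi,X)$. Thus $d\eta=0$ says precisely that the $(1,1)$-tensor $X\mapsto\nabla_X\xi$ is self-adjoint. Consequently $g(\nabla_\xi\xi,Y)=g(\nabla_Y\xi,\xi)=\tfrac12 Y\bigl(g(\xi,\xi)\bigr)=0$ for every $Y$, because $|\xi|^2=\eta(\xi)=1$ is constant; hence $\nabla_\xi\xi=0$.

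For the parallelism of $\varphi$ I would exploit the low dimension. Applying $\nabla_\xi\varphi$ to $\xi$ gives $(\nabla_\xi\varphi)\xi=-\varphi(\nabla_\xi\xi)=0$ by the first part, and skew-adjointness then forces the image of $\nabla_\xi\varphi$ to lie in $\mathcal D=\ker\eta$. So $\nabla_\xi\varphi$ annihilates $\xi$ and restricts to a skew-adjoint endomorphism of the two-plane $\mathcal D$. In dimension three $\mathcal D$ is two-dimensional, and every skew-adjoint endomorphism of an oriented Euclidean plane is a scalar multiple of the quarter-turn $\varphi|_{\mathcal D}$; since both $\nabla_\xi\varphi$ and $\varphi$ kill $\xi$, this gives $\nabla_\xi\varphi=c\,\varphi$ for some function $c$.

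It remains to show $c=0$, and here the structure equation does the work. Differentiating $\varphi^2=-Id+\eta\otimes\xi$ along $\xi$ and using $\nabla_\xi\xi=0$ — which also yields $\nabla_\xi\eta=0$ — the right-hand side has vanishing covariant derivative, so $(\nabla_\xi\varphi)\varphi+\varphi(\nabla_\xi\varphi)=0$; that is, $\nabla_\xi\varphi$ anti-commutes with $\varphi$. Substituting $\nabla_\xi\varphi=c\,\varphi$ gives $2c\,\varphi^2=0$, and since $\varphi^2=-Id+\eta\otimes\xi\neq0$ we conclude $c=0$, hence $\nabla_\xi\varphi=0$. The one genuinely non-formal step is the reduction to a scalar multiple of $\varphi$, which is exactly where dimension three is indispensable: in higher dimensions a skew-adjoint endomorphism of $\mathcal D$ anti-commuting with $\varphi$ need not vanish.
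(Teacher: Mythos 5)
Your proof is correct, and it follows a genuinely different route from the paper's. The paper argues top-down: it invokes the formula of Lemma 6.1 in Blair's book for $g((\nabla_X\varphi)Y,Z)$ in terms of $d\varPhi$, $d\eta$ and the Nijenhuis torsion, specializes it to $d\eta=0$, and feeds in the dimension-three identity $d\varPhi=\alpha\,\eta\wedge\varPhi$; with $X=\xi$ the torsion term dies against $\varphi\xi=0$ and the two $d\varPhi$ terms cancel because $\varPhi(\varphi Y,\varphi Z)=\varPhi(Y,Z)$, giving $\nabla_\xi\varphi=0$, from which $\nabla_\xi\xi=0$ is then \emph{deduced}. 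You work in the opposite order and with bare hands: $\nabla_\xi\xi=0$ comes first, directly from the equivalence of $d\eta=0$ with self-adjointness of $X\mapsto\nabla_X\xi$ together with $g(\xi,\xi)=1$; then $\nabla_\xi\varphi=0$ follows from pointwise linear algebra, since the operator $\nabla_\xi\varphi$ is skew-adjoint and annihilates $\xi$, hence equals $c\,\varphi$ on the two-plane $\mathcal{D}=\ker\eta$, while differentiating $\varphi^2=-Id+\eta\otimes\xi$ along $\xi$ shows it anti-commutes with $\varphi$, forcing $c=0$. Your route is more elementary and self-contained (no appeal to Blair's Lemma 6.1, whose proof is a substantial computation), and it pinpoints exactly where $\dim M=3$ enters: skew-adjoint endomorphisms of a Euclidean plane form the line spanned by the quarter-turn, a reduction that indeed fails in higher dimensions. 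What the paper's route buys is economy given the cited lemma, plus a by-product that is reused later: the identity $d\varPhi=2\lambda\,\eta\wedge\varPhi$ introduces the weight $\lambda$ of a cell, which drives the constructions of Section 5. Both proofs correctly isolate the role of low dimension; yours would be the natural choice for a self-contained exposition.
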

\begin{proof}
The formula for the covariant derivative $\nabla\varphi$ from  the Lemma 6.1 in \cite{Blair} in the case $d\eta=0$ will take a shape
\begin{equation*}
 g((\nabla_X\varphi)Y,Z)=3d\varPhi(X,\varphi Y,\varphi Z)-3d\varPhi(X,Y,Z) +g([\varphi,\varphi](Y,Z),\varphi X),
\end{equation*}
note $d\varPhi=\alpha\eta\wedge\varPhi$ for a function $\alpha$ on $M^3$, therefore 
\begin{equation*}
g((\nabla_\xi\varphi)Y,Z) = 3d\varPhi(\xi,\varphi Y,\varphi Z)-3d\varPhi(\xi,Y,Z)  = \alpha(\varPhi(\varphi Y,\varphi Z)-\varPhi(Y,Z)) =0,  
\end{equation*}
and $\nabla_\xi\varphi=0$ which implies $\nabla_\xi\xi=0$.
\end{proof}

\section{An almost metric $f$-structure of a cell product}

An affinor $f$ on a manifold $M$, such that $f^3+f=0$  is called an almost $f$-structure. The existence of an almost $f$-structure 
determines a reduction of a structure group of the manifold. If this $G$-structure is integrable, the affinor $f$ is called 
a $f$-structure. Examples of almost $f$-structures are almost complex structures and almost contact structures. If there is 
a Riemannian metric $\bar g$ on the manifold $M$ such that $\bar g(fX,Y)=-\bar g(X,fY)$ then a triple $(M,f,\bar g)$ is called 
an almost metric $f$-manifold {\cite{FalPastFram,YanoIshi,YanoKon}}.

It is not stated explicitly in the above definition but it is assumed that distributions 
\begin{equation*}
 \begin{array}{c}
 Ker(f):p\mapsto \lbrace X\in T_pM| fX=0\rbrace, \\[+4pt]
Im(f):p\mapsto \lbrace X\in T_pM| X=fY \rbrace
\end{array}
\end{equation*}
the kernel of $f$ and the image have constant dimensions. The distribution $Im(f)$ is always even-dimensional 
$\text{dim}\, Im(f) =2l$ 
and the restriction $J=f|_{Im(f)}$ defines a formal almost complex structure, i.e. $J^2X=-X$ whenever $X\in \Gamma(Im(f))$ is a section of $Im(f)$. The distributions $Ker(f)$, $Im(f)$ are complementary and orthogonal with res. to the metric $\bar g$. 

Let $(M,f,\bar g)$ be an almost metric $f$-manifold. A framing $(\bar\xi_1,\ldots,\bar\xi_k)$ is a repair 
of orthonormal vector fields which spann $Ker(f)$, here $k=\text{dim}\,Ker(f)$. A coframing is a corepair of 
dual one-forms $(\bar\eta_1,\ldots,\bar\eta_k)$, $\bar\eta_i(\bar\xi_j)=\delta_i^j$, $\delta_i^j$ - the Kronecker's 
$\delta$. Note that arbitrary $\bar\xi_i$ is perpendicular to the image $Im(f)$, moreover $Im(f)$ coincides with 
a common kernel of the forms $\bar\eta_i$, i.e. $Im(f) = ker\,\bar\eta_1 \cap \ldots \cap ker\,\bar\eta_k$.

An almost metric $f$-manifold is called a $C$-manifold if near each point $p\in M$ there is a closed coframing, i.e. 
each form $\bar\eta_i$ is closed $d\bar\eta_i=0$. In a consequence the distribution $Im(f)$ is involutive, hence 
completely integrable and a leaf $N\subset M$ is an almost complex submanifold.  

Let $C_i=(M_i^3,\varphi_i,\xi_i,\eta_i,g_i)$, $i=1,\ldots,k$  be a finite family of cells, 
$M=C_1\times C_2\times\ldots \times C_k$ be the Cartesian product. By $\pi_i:M\rightarrow C_i$ we denote 
the canonical projections, thus if $q=(p_1,\ldots,p_k)\in M$ then $\pi_i(q)=p_i\in C_i$. The distributions
$D_1,D_2,\ldots,D_k$ are defined by 
$$  D_i= \bigcap\limits_{j\neq i}Ker\,\pi_{*j}, $$
where $\pi_{*j}:TM\rightarrow TC_j$ denotes the tangent map. We have the canonical splitting $TM=D_1\oplus\ldots\oplus D_k$.
Let $X$ be a vector field on a cell $C_i$. By a lift of the vector field $X$ we mean a vector field $\bar X$ on the 
manifold $M$, such that $\pi_{*i}(\bar X)=X$ and $\bar X\in \Gamma(D_i)$.  

\begin{proposition}
 For a vector field $X$ on a cell $C_i$, there exists a lift $\bar X$ and is determined 
 uniquely.
\end{proposition}
 
\begin{proof}
 Assume $\bar X_1$, $\bar X_2$ are lifts of the vector field $X$, at every point where the lifts are defined
$\pi_{*i}(\bar X_1-\bar X_2) = 0$, therefore $\bar X_1-\bar X_2$ belongs to the kernels of all 
projections $\pi_{*\cdot}$'s, so must vanishes identically. This proves the uniqueness. In consequence
it is enough to prove the existence only locally.
Now let fix a point $q=(p_1,\ldots,p_k)$, $x=\pi_i(q)=p_i$
and let a neighborhood $U_x$ of the point $x$ be such that a local flow $exp(tX)$ generated by $X$ 
exists on $U_x$. Near a point $q\in M$ we define a local flow $s_t$ on $\bar U_q=\pi_i^{-1}(U_x)$
\begin{equation}
 s_t(p_1,\ldots,p_i,\ldots,p_k) = (p_1,\ldots, exp(tX)p_i, \ldots, p_k).
\end{equation}
From the definition $\pi_i\circ s_t=exp(tX)$. Let $\bar X$ be an infinitesimal generator of $s_t$, for $p\in \bar U_q$
$$
X_{\pi(p)} = \frac{d}{dt}\pi_i(s_t p)|_{t=0} = \pi_{*i}(\frac{d}{dt}s_t p|_{t=0}) = \pi_{*i}(\bar X_p).
$$
Finally for $j\neq i$, $\pi_j(s_t p)= const$, $\pi_{j*}(\bar X)=0$, the vector field $\bar X$ belongs to the kernels of $\pi_j$, 
therefore it is a section of the distribution $D_i$. 
\end{proof}

If $X_1$, $X_2$ are vector fields on $C_i$ and $\tau_1$, $\tau_2$ are functions then the lift of the combination
$\tau_1 X_1+\tau_2 X_2$  is a vector field $\tau_1^*\bar X_1+\tau_2^*\bar X_2$, where $\tau_{1,2}^*=\tau_{1,2}\circ\pi_i$.

For the cell $(C_i,\varphi_i,\xi_i,\eta_i,g_i)$ we define a tensor field $\bar\varphi_i$ (a lift) on the product $M=C_1\times\ldots \times C_k$ as follows: let 
\begin{equation*}
 \varphi_i = \sum\limits_{k,j=1}^3\varphi_{ki}^j\alpha^k\otimes X_j,
\end{equation*}
be a local description of the tensor $\varphi_i$ with res. to a local repair $(X_1,X_2,X_3)$ on $C_i$, here 
$\varphi_{ki}^j$ are smooth functions on $C_i$. Then by the definition 
\begin{equation*}
 \bar\varphi_i = \sum\limits_{k,j=1}^3\bar\varphi_{ki}^{j}\bar\alpha^k\otimes\bar X_j,
\end{equation*}
where $\bar\varphi_{ki}^{j}=\varphi_{ki}^j\circ \pi_j$ are functions on $M$, the forms $\bar\alpha^k=\pi_i^*\alpha^k$ are 
pullbacks and $\bar X_j$ are lifts of the vector fields $X_j$. The tensor field $\bar \varphi_i$ can be characterized as follows:
if $\bar X$ is a lift, then $\bar\varphi_i\bar X$ is a lift of the vector field $\varphi_i X$, $\bar\varphi_i\bar X=\overline{\varphi_i X}$.

\begin{theorem}
 A pair $(f,g)$ of the tensor fields
\begin{equation}
 f = \sum\limits_{i=1}^k\bar\varphi_i,
\end{equation}
and $\bar g$ - the Riemannian product metric
\begin{equation}
 \bar g = \sum\limits_{i=1}^k \pi_i^*g_i.
\end{equation} 
defines an almost metric $f$-structure on the cell product, which is globally framed. Moreover $(M,f,g)$ is a C-manifold. 
The fundamental form $\bar\varPhi(X,Y)=\bar g(X,fY)$ is given by 
\begin{equation}
 \bar\varPhi =\sum\limits_{i=1}^k\pi_i^*\varPhi_i, 
\end{equation}
 i.e. is the sum of the pullbacks of the fundamental forms of the cells.
\end{theorem}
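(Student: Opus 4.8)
The plan is to reduce every assertion to a fiberwise statement on the individual cells, by first pinning down how each lifted affinor $\bar\varphi_i$ acts on the canonical splitting $TM = D_1 \oplus \cdots \oplus D_k$. The \textbf{key structural fact} is that $\bar\varphi_i$ is supported entirely on $D_i$: from the local expression $\bar\varphi_i = \sum_{k,j} \bar\varphi_{ki}^j\,\bar\alpha^k \otimes \bar X_j$ the covectors $\bar\alpha^k = \pi_i^*\alpha^k$ annihilate $Ker\,\pi_{*i} = \bigoplus_{j\neq i} D_j$, so $\bar\varphi_i$ vanishes on $D_j$ for $j \neq i$, while on $D_i$ it acts as the lift of $\varphi_i$, characterized by $\bar\varphi_i \bar X = \overline{\varphi_i X}$. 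Consequently $\bar\varphi_i \bar\varphi_j = 0$ for $i \neq j$, and $f = \sum_i \bar\varphi_i$ is block-diagonal for the splitting, restricting on $D_i$ to (the lift of) $\varphi_i$. Likewise the product metric $\bar g = \sum_i \pi_i^* g_i$ makes the $D_i$ mutually orthogonal and restricts on $D_i$ to (the lift of) $g_i$.

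With this in hand the algebraic identities follow blockwise. For $f^3 + f = 0$ I would use $\bar\varphi_i \bar\varphi_j = 0$ $(i\neq j)$ to get $f^3 = \sum_i \bar\varphi_i^3$ and $f = \sum_i \bar\varphi_i$, and then on each block invoke $\varphi_i^3 + \varphi_i = 0$, the standard consequence of $\varphi_i^2 = -Id + \eta_i \otimes \xi_i$ together with $\varphi_i \xi_i = 0$. For the skew-symmetry $\bar g(fX, Y) = -\bar g(X, fY)$, since both tensors are block-diagonal the claim reduces to $g_i(\varphi_i X, Y) = -g_i(X, \varphi_i Y)$ on each cell, which is exactly the skew-symmetry of the fundamental $2$-form $\varPhi_i(X,Y) = g_i(X, \varphi_i Y)$ recorded in the Preliminaries. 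This proves that $(f,\bar g)$ is an almost metric $f$-structure. The formula for $\bar\varPhi$ drops out of the same decomposition: writing $X = \sum_i X^{(i)}$, $Y = \sum_i Y^{(i)}$ with $X^{(i)}, Y^{(i)} \in D_i$, one gets $\bar g(X, fY) = \sum_i g_i(\pi_{*i} X^{(i)}, \varphi_i \pi_{*i} Y^{(i)}) = \sum_i (\pi_i^* \varPhi_i)(X,Y)$.

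For the global framing I would identify $Ker\,f$. Since on the $3$-dimensional cell $\varphi_i$ has rank $2$ with one-dimensional kernel $Span(\xi_i)$, the block form gives $Ker\,f = Span(\bar\xi_1, \ldots, \bar\xi_k)$, where $\bar\xi_i$ is the lift of the globally defined field $\xi_i$ and is itself globally defined. These lifts are $\bar g$-orthonormal: $\bar g(\bar\xi_i, \bar\xi_j) = \delta_{ij}\, g_i(\xi_i,\xi_i) = \delta_{ij}$, using the orthogonality of distinct $D_i$ and the standard relation $\eta_i(X) = g_i(X,\xi_i)$, whence $g_i(\xi_i,\xi_i) = \eta_i(\xi_i) = 1$. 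Thus $(\bar\xi_1,\ldots,\bar\xi_k)$ is a global framing; the dual coframing is $\bar\eta_i = \pi_i^*\eta_i$, since $\bar\eta_i(\bar\xi_j) = \delta_i^j$ follows from $\eta_i(\xi_i)=1$ and $\pi_{*i}(D_j) = 0$ for $j\neq i$.

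Finally, the $C$-manifold property is immediate and is \emph{where the defining hypothesis of a cell enters}: each coframing form is closed, $d\bar\eta_i = d(\pi_i^*\eta_i) = \pi_i^*(d\eta_i) = 0$, because $d\eta_i = 0$ on every cell; closedness holds globally, so a fortiori near each point. The only step demanding care — and the real content of the argument — is the very first one, the precise description of the support of $\bar\varphi_i$ and the resulting block-diagonal form of $f$ and $\bar g$. Once that is established, every remaining claim is a one-line blockwise reduction to a standard fact about the individual almost contact metric cells.
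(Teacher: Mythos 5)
Your proposal is correct and follows essentially the same route as the paper: both exploit the block-diagonal behaviour of $f$ and $\bar g$ with respect to the splitting $TM=D_1\oplus\cdots\oplus D_k$ (via $\bar\varphi_i\bar\varphi_j=0$ for $i\neq j$ and the orthogonality of the $D_i$) to reduce every claim to the individual cells, and both obtain the $C$-manifold property from $d\bar\eta_i=\pi_i^*(d\eta_i)=0$. The only difference is cosmetic: where you identify $\ker f=\span(\bar\xi_1,\ldots,\bar\xi_k)$ directly as the sum of the blockwise kernels $\ker\varphi_i=\span(\xi_i)$, the paper reaches the same conclusion by a contradiction argument with local extensions, resting on the same underlying fact.
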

\begin{proof}
 From the definition of $\bar\varphi_i$ it follows that $\bar\varphi_i\bar\varphi_j=\bar\varphi_j\bar\varphi_i=0$ for 
$i\neq j$, and $\bar\varphi_i^3+\bar\varphi_i=0$, $i=1,\ldots,k$. Therefore
\begin{equation}
 f^3+f= \sum\limits_{i=1}^k (\bar\varphi_i^3+\bar\varphi_i)=0.
\end{equation}
  With res. to the product metric the decomposition
$TM=D_1\oplus\ldots\oplus D_k$ is an orthogonal decomposition, i.e. $D_i$, $D_j$ are pairwise orthogonal,
each projection $\pi_i$ is a Riemannian submersion, moreover if 
$\bar X\in \Gamma(D_i)$, $\bar Y\in \Gamma(D_j)$  are  lifts  then 
\begin{equation}
\begin{array}{l}
\bar g(\bar X, \bar Y) =  
\begin{cases}
 g_i(X,Y)\circ\pi_i, & i=j \\
0 , & i \neq j,
\end{cases} \\ [+8pt]
\bar g(f \bar X,\bar Y) = \bar g(\bar\varphi_i \bar X,\bar Y) = 
 \begin{cases}
 g_i(\varphi_i X,Y)\circ\pi_i,& i=j \\ 0, & i\neq j,
\end{cases}
\end{array}
\end{equation}
hence $f$ is skew-symmetric.
For the lifts $\bar\xi_i$
\begin{equation*}
 f\bar\xi_i=\bar\varphi_i\bar\xi_i=\pi_{i*}(\varphi_i\xi_i)=0.
\end{equation*}
Let a vector field $U$ be orthogonal to a distribution spanned by $(\bar\xi_1,\ldots,\bar\xi_k)$. Assume that 
$fU=0$ for arbitrary point but at a point $q$, $U_q\neq 0$.
Let $U^i$ be an orthogonal projection on the distribution $D_i$, such that $U^i_q\neq 0$.  
We extend a vector $V_x=\pi_{i*}(U^i_q)$, $x=\pi_i(q)$  to a local vector field 
$V$, $V\perp\xi_i$. For the lift $\bar V$, $f\bar V_q=\bar\varphi_i\bar V$, 
and by assumption $0=\pi_{*i}(\bar\varphi_i\bar V)=\varphi V$ at $x=\pi_i(q)$, implies $V=0$ and $\bar V_q=U^i_q=0$,
the contradiction. Therefore
\begin{equation*}
 Ker(f) = Spann(\bar\xi_1,\ldots,\bar\xi_k).
\end{equation*}
Pullbacks $(\bar\eta_1,\ldots,\bar\eta_k)$ define a dual closed coframing for $\bar\eta_i=\pi_i^*\eta_i$ and all forms $\eta_i$ are closed. 
 \end{proof}

\section{An extrinsic Riemannian geometry of a manifold of sewed cells}
In the present paper we are merely interested  in the structure of a product of cells 
but rather in a very particular submanifolds.

Let $(M,f,\bar g, \bar\xi_1,\ldots,\bar\xi_k)$ be a product of cells $k \geqslant 2$,
$M=C_1\times\ldots\times C_k$, $C_i=(C_i,\varphi_i,\xi_i,\eta_i,g_i)$  with 
its canonical almost metric $f$-structure, and canonical global framing $(\bar\xi_1,\ldots,\bar\xi_k)$, 
defined by the lifts of the vector fields $\xi_i$. Let define a median vector field $\bar\xi$ on $M$
\begin{equation*}
 \bar\xi = \dfrac{\bar\xi_1+\ldots+\bar\xi_k}{\sqrt{k}},
\end{equation*}
the median $\bar\xi$ is globally defined and $\bar g(\bar\xi,\bar\xi)=1$.
\begin{proposition}
\label{invfxi}
A distribution $Im(f)\oplus \mathbb{R}\bar\xi$ is involutive. 
\end{proposition}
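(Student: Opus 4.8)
The plan is to realize the rank-$(2k+1)$ distribution $\im(f)\oplus\mathbb{R}\bar\xi$ as the common kernel of a family of \emph{closed} one-forms, after which involutivity follows immediately from the Frobenius criterion, the closedness being supplied by the $C$-manifold property established in Section~3.

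First I would record how the median pairs with the coframing. Since $\bar\eta_i(\bar\xi_j)=\delta_i^j$ and $\bar\xi=\tfrac1{\sqrt k}(\bar\xi_1+\dots+\bar\xi_k)$, linearity gives $\bar\eta_i(\bar\xi)=\tfrac1{\sqrt k}$ for every $i$. Recalling that $\im(f)=\ker\bar\eta_1\cap\dots\cap\ker\bar\eta_k$, this shows that for a vector $V=W+t\bar\xi$ with $W\in\im(f)$ all the numbers $\bar\eta_i(V)=t/\sqrt k$ coincide; conversely, if $\bar\eta_1(V)=\dots=\bar\eta_k(V)=:c$, then $V-c\sqrt k\,\bar\xi$ is annihilated by every $\bar\eta_i$ and hence lies in $\im(f)$. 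Therefore
\[
 \im(f)\oplus\mathbb{R}\bar\xi=\{\,V:\bar\eta_1(V)=\bar\eta_2(V)=\dots=\bar\eta_k(V)\,\},
\]
which is precisely the common kernel of the $k-1$ one-forms $\omega_i:=\bar\eta_i-\bar\eta_k$, $i=1,\dots,k-1$.

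Next I would invoke the fact that the cell product is a $C$-manifold, so each $\bar\eta_i=\pi_i^*\eta_i$ is closed; consequently every $\omega_i$ is closed, $d\omega_i=d\bar\eta_i-d\bar\eta_k=0$. A codistribution spanned by closed one-forms satisfies the Frobenius integrability condition trivially, since each $d\omega_i=0$ already lies in the differential ideal generated by the $\omega_i$. Hence $\im(f)\oplus\mathbb{R}\bar\xi$ is involutive.

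If one prefers a direct bracket argument avoiding the Frobenius machinery, the same closedness does the work. For vector fields $X,Y$ in the distribution, write $\lambda(\cdot)$ for the common value $\bar\eta_1(\cdot)=\dots=\bar\eta_k(\cdot)$; then $d\bar\eta_i=0$ and the identity $d\bar\eta_i(X,Y)=X\bar\eta_i(Y)-Y\bar\eta_i(X)-\bar\eta_i([X,Y])$ yield $\bar\eta_i([X,Y])=X(\lambda(Y))-Y(\lambda(X))$, an expression independent of $i$, so $[X,Y]$ again lies in the distribution. The one subtle step — and the part I would be most careful about — is the first: observing that the median was chosen exactly so as to pair equally with every $\bar\eta_i$, which is what makes the distribution the locus where these closed forms agree. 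Once this identification is in hand, involutivity is an immediate consequence of $d\bar\eta_i=0$.
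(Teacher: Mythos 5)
Your proof is correct, and it reaches the conclusion by a genuinely different route than the paper. The paper argues directly on brackets: from $0=2d\bar\eta_i(\bar\xi_j,X)=-\bar\eta_i([\bar\xi_j,X])$ it concludes $[\bar\xi_j,X]\in\Gamma(Im(f))$ for $X\in\Gamma(Im(f))$, hence $[\bar\xi,X]\in\Gamma(Im(f))$, and then treats a general section $X=\bar X+\tau\bar\xi$ by the Leibniz rule; brackets $[\bar X,\bar Y]$ of two sections of $Im(f)$ are covered by the earlier remark that on a $C$-manifold the distribution $Im(f)$ is involutive. You instead characterize the distribution dually, as the locus where $\bar\eta_1(V)=\dots=\bar\eta_k(V)$, i.e.\ as the common kernel of the $k-1$ closed one-forms $\omega_i=\bar\eta_i-\bar\eta_k$, after which involutivity is immediate from the Frobenius criterion (or from the bracket identity for closed forms, applied uniformly to all pairs of sections). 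Both arguments rest on exactly the same ingredient, the closedness of the coframing $\bar\eta_i=\pi_i^*\eta_i$. What your packaging buys: it is self-contained --- it does not need the separately established involutivity of $Im(f)$, and it handles all brackets at once, whereas the paper's displayed computation only treats brackets with $\bar\xi$ --- and your key identification is precisely the identity the paper records immediately after the proposition as equation (\ref{alletas}), which it then uses to show that the leaves coincide locally with the diagonals $\Delta_0$; so your route anticipates that step and in effect exhibits the integral manifolds explicitly. What the paper's version buys is only brevity at that point in the text, since the dual characterization is deferred to where it is first needed.
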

\begin{proof}
Let $X\in \Gamma(Im(f))$
$$
0=2d\bar\eta_i(\bar\xi_j,X)=-\bar\eta_i([\bar\xi_j,X]),\quad i,j=1,\ldots,k,
$$ 
therefore $[\bar\xi_j,X]\in \Gamma(Im(f))$ 
\begin{equation*}
[\bar\xi,X] = \frac{1}{\sqrt{k}}\sum\limits_{i=1}^k[\bar\xi_i,X] \in \Gamma(Im(f)),  
\end{equation*}
if $X$ is an arbitrary vector field tangent to $Im(f)\oplus \mathbb{R}\bar\xi$, then $X=\bar X+ \tau\bar\xi$, 
$\bar X\in \Gamma(Im(f))$, $\tau$  is a function on $M$, and 
$
[\bar\xi,X] = [\bar\xi,\bar X]+(\bar\xi\tau)\bar\xi \in \Gamma(Im(f)\oplus\mathbb{R}\bar\xi
$.
\end{proof}

By the  Proposition \ref{invfxi} through any point of $M$ is passing a unique integral submanifold $N\subset M$ 
of $Im(f)\oplus\mathbb{R}\bar\xi$ we shall call a manifold of sewed cells $N=C_1-C_2-\ldots-C_k$.  

Let $q_0\in M$, $\pi_i(q_0)=x^i_0\in C_i$, let $x^i_0\in U^i_0\subset C_i$ be an open disc such small there is 
a function $\tau_i:U^i_0\rightarrow \mathbb{R}$, $d\tau_i=\eta_i|_{U^i_0}$, $\tau_i(U^i_0)=(-\epsilon_i,\epsilon_i)$,
$\tau_i(x^i_0)=0\in \mathbb{R}$.
A set $q_0\in U_0=\bigcap_{i=1}^k\pi_i^{-1}(U^i_0)$ 
with functions $\bar\tau_1=\tau_1\circ\pi_1$,..,$\bar\tau_k=\tau_k\circ\pi_k$ we will call a polidisc centered at $q_0$, 
from the definition $\bar\tau_1(q_0)=\ldots=\bar\tau_k(q_0)=0$.
Let $\Delta_0 \subset U_0$  be a connected component of a set
\begin{equation*}
 \Delta =\lbrace q\in U_0\; |\; \bar\tau_1(q)=\bar\tau_2(q)=\ldots=\bar\tau_k(q) \rbrace, 
\end{equation*}
containing $q_0$, (all the functions $\bar\tau_i$ attain the same value on a point of $\Delta_0$).
 If we define a local map $U_0\rightarrow\mathbb{R}^k$, 
$U_0\ni q\mapsto (\bar\tau_1(q),\ldots,\bar\tau_k(q))\in \mathbb{R}^k$ then the image $\hat\Delta$ lies on the diagonal 
$\lbrace (t,\ldots,t)\subset \mathbb{R}^k\rbrace$.

For  a section  $X=\bar X+\tau\bar\xi$ of $Im(f)\oplus\mathbb{R}\bar\xi$, 
\begin{equation}
\label{alletas}
\bar\eta_1(X)=\bar\eta_2(X)=\ldots=\bar\eta_k(X)=\frac{\tau}{\sqrt{k}}.
\end{equation}
Let fix a point $q_0\in M$, $N=C_1-\ldots-C_k$ are sewed cells through $q_0$, and $(U_0, \bar\tau_1,\ldots,\bar\tau_k)$ is a polidisc at $q_0$,
and $N_0\subset N\cap U_0$ be an embedded, connected, simply connected part of $N$ containing $q_0$. Let 
$\iota:N_0\subset U_0$ denote an inclusion map.  For a point $p\in N_0$,
$\gamma:[0,1]\rightarrow N_0$ is a smooth curve joining the points $p$ and $q_0$, $\gamma(0)=q_0$, $\gamma(1)=p$.
Then $r=(\tau_1\circ\gamma,\ldots,\tau_k\circ\gamma)$ is a curve in $\mathbb{R}^k$
\begin{equation*}
 \dot r = (\bar\tau_{1*}\circ\dot \gamma,\ldots,\bar\tau_{2*}\circ\dot \gamma)
= (\bar\eta_1(\iota_*\dot\gamma),\ldots,\bar\eta_k(\iota_*\dot\gamma)),
\end{equation*}
by (\ref{alletas}) components of the tangent vector $\dot r$ are equal to each other, therefore the curve $r(s)$ itself  must lie 
on the diagonal $\lbrace (t,\ldots,t)\subset \mathbb{R}^k\rbrace$ as $s(0)=0\in \mathbb{R}^k$. Particularly
$r(1)=(\tau_1(p),\ldots,\tau_k(p))$ and $\tau_1(p)=\ldots=\tau_k(p)$ and, as $p$ is arbitrary, 
$N=C_1-\ldots-C_k \subset \Delta_0$. The diagonal $\Delta_0$ and the sewed cells $N$ have the same dimensions - 
$N_0|_{U_0}=\Delta_0|_{U_0}$ on a sufficiently small negihborhood of $q_0$.

In what will follow we will study extrinsic geometry of a manifold of sewed cells as a Riemannian submanifold in the product 
$M=C_1\times\ldots\times C_k$, i.e. Gauss's, Weingarten's equations {\cite{KobNomII}}. We recall that as 
$M$ is the Riemannian product the distributions $D_i$ are totally parallel:
for a section $Y\in \Gamma(D_i)$, $\bar\nabla_XY\in \Gamma(D_i)$ - for arbitrary vector field $X$ on $M$. More
geometrically $D_i$ are invariant with respect to the parallel  displacements. In consequence, the Riemannian curvature of 
the manifold $M$ - $\bar R(X,Y) =0$ identically if $X\in\Gamma(D_i)$, $Y\in\Gamma(D_j)$, $i\neq j$.

\begin{proposition}
\label{covlifts}
 Let $\bar X\in \Gamma(D_i)$, $\bar Y\in \Gamma(D_j)$ are lifts of the vector fields $X$, $Y$ from the i-th and j-th cells res.
Then
\begin{equation}
 \bar\nabla_{\bar X}\bar Y = 
\begin{cases}
\overline{\nabla_XY}, & i=j,\\
0, & i\neq j, 
\end{cases}
\end{equation}
i.e. the covariant derivative of the lifts is a lift of the covariant derivative of the vector fields.
 \end{proposition}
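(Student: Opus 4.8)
The plan is to characterize $\bar\nabla_{\bar X}\bar Y$ by the Koszul formula and to feed in two facts: the pairing of lifts under $\bar g$ recorded in the proof of the Theorem of Section~3, namely $\bar g(\bar X,\bar Y)=g_i(X,Y)\circ\pi_i$ when $i=j$ and $\bar g(\bar X,\bar Y)=0$ when $i\neq j$, and the behaviour of the Lie bracket of lifts. The bracket identity is the crux, so I would isolate it first: \emph{for lifts $\bar X\in\Gamma(D_i)$ and $\bar Y\in\Gamma(D_j)$ one has $[\bar X,\bar Y]=0$ if $i\neq j$ and $[\bar X,\bar Y]=\overline{[X,Y]}$ if $i=j$.} I would prove this by projection. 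A lift $\bar X\in\Gamma(D_i)$ is $\pi_\ell$-related to $X$ when $\ell=i$ and to the zero field when $\ell\neq i$; since Lie brackets are natural under $\pi_\ell$-relatedness, $[\bar X,\bar Y]$ is $\pi_\ell$-related to $[X,0]$, $[0,Y]$, or $[X,Y]$ according to the case. When $i\neq j$ every projection of $[\bar X,\bar Y]$ vanishes, so $[\bar X,\bar Y]=0$; when $i=j$ the bracket lies in $D_i$ and projects to $[X,Y]$, hence equals $\overline{[X,Y]}$ by uniqueness of lifts.

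For the off-diagonal case $i\neq j$ I would avoid Koszul altogether and use the two structural facts already in hand. Torsion-freeness of $\bar\nabla$ together with $[\bar X,\bar Y]=0$ gives $\bar\nabla_{\bar X}\bar Y=\bar\nabla_{\bar Y}\bar X$. The left-hand side is a section of $D_j$ and the right-hand side a section of $D_i$, because the factors $D_i$ are totally parallel, as recalled just before the statement. Since $D_i\cap D_j=0$ for $i\neq j$, both sides vanish and $\bar\nabla_{\bar X}\bar Y=0$.

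For the diagonal case $i=j$ total parallelism already forces $\bar\nabla_{\bar X}\bar Y\in\Gamma(D_i)$, so its pairing against any lift $\bar Z\in\Gamma(D_\ell)$ with $\ell\neq i$ is automatically zero and it remains only to identify the $D_i$-component. Testing against a lift $\bar Z\in\Gamma(D_i)$ with the Koszul formula for $\bar g$, each of the six terms is the $\pi_i$-pullback of the corresponding term of the Koszul formula on $C_i$: the three derivative terms because $\bar X(h\circ\pi_i)=(Xh)\circ\pi_i$ for $\bar X\in\Gamma(D_i)$ and $h$ a function on $C_i$, and the three commutator terms because $[\bar X,\bar Y]=\overline{[X,Y]}$ together with $\bar g(\overline{[X,Y]},\bar Z)=g_i([X,Y],Z)\circ\pi_i$. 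Their sum is therefore $2\,g_i(\nabla_XY,Z)\circ\pi_i=2\,\bar g(\overline{\nabla_XY},\bar Z)$. Since lifts span $T_qM$ at every point $q$, this together with the vanishing for $\ell\neq i$ yields $\bar\nabla_{\bar X}\bar Y=\overline{\nabla_XY}$.

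The main obstacle is the bracket lemma; once the $\pi_\ell$-relatedness of lifts is correctly set up, both cases of the proposition follow almost formally — the off-diagonal one from torsion-freeness and transversality of the factors, the diagonal one from a term-by-term match with the Koszul formula on a single cell.
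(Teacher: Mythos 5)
Your proof is correct and follows essentially the same route as the paper: the key identity $[\bar X,\bar Y]=\overline{[X,Y]}$ for lifts combined with the Koszul formula to get $\bar g(\bar\nabla_{\bar X}\bar Y,\bar Z)=g_i(\nabla_XY,Z)\circ\pi_i$, and then identification of the result as the lift $\overline{\nabla_XY}$. You merely fill in details the paper leaves implicit --- the bracket lemma via $\pi_\ell$-relatedness, and the off-diagonal case (handled by torsion-freeness and total parallelism of the $D_i$), which the paper dismisses as obvious.
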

\begin{proof}
The case $i\neq j$ is obvious so let assume $i=j$.
From the definition of the lift $\bar X$ it follows that $[\bar X,\bar Y] = \overline{[X,Y]}$, for the Lie
bracket of the vector fields $[X,Y]$. The Koszul's formula for the covariant derivative follows
\begin{equation*}
 \bar g(\bar\nabla_{\bar X}\bar Y,\bar Z) = g(\nabla_X Y,Z)\circ\pi_i,
\end{equation*}
so $\pi_{i*}(\bar\nabla_{\bar X}\bar Y)=\nabla_X Y$, and $\bar\nabla_{\bar X}\bar Y\in \Gamma(D_i)$, therefore 
by the uniqueness $\bar\nabla_{\bar X}\bar Y =\overline{\nabla_XY}$.
 \end{proof}
\begin{corollary}
\label{curvlifts}
\begin{equation}
 \bar R(\bar X,\bar Y)\bar Z= \overline{R(X,Y)Z},
\end{equation}
whenever the right hand makes sense.
\end{corollary}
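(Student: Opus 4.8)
The plan is to reduce the statement entirely to Proposition \ref{covlifts} by unfolding the definition of the curvature operator. The phrase ``whenever the right hand makes sense'' signals that we take $\bar X$, $\bar Y$, $\bar Z$ to be lifts of vector fields $X$, $Y$, $Z$ all living on the \emph{same} cell $C_i$, so that the intrinsic curvature $R(X,Y)Z$ of $C_i$ is defined; the mixed cases $\bar X\in\Gamma(D_i)$, $\bar Y\in\Gamma(D_j)$ with $i\neq j$ have already been disposed of by the vanishing of $\bar R$ across distinct factors noted at the end of the previous section.

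First I would recall from the proof of Proposition \ref{covlifts} that the lift is compatible with Lie brackets, $[\bar X,\bar Y]=\overline{[X,Y]}$, and that Proposition \ref{covlifts} itself gives $\bar\nabla_{\bar U}\bar V=\overline{\nabla_U V}$ for any two lifts $\bar U$, $\bar V$ of vector fields $U$, $V$ on $C_i$. The crucial observation is that the output $\overline{\nabla_U V}$ is again a section of $D_i$, i.e.\ again a lift from the $i$-th cell, so Proposition \ref{covlifts} may be applied repeatedly without ever leaving the distribution $D_i$.

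Then I would simply expand
\begin{equation*}
\bar R(\bar X,\bar Y)\bar Z=\bar\nabla_{\bar X}\bar\nabla_{\bar Y}\bar Z-\bar\nabla_{\bar Y}\bar\nabla_{\bar X}\bar Z-\bar\nabla_{[\bar X,\bar Y]}\bar Z,
\end{equation*}
and treat the three terms in turn. For the first, $\bar\nabla_{\bar Y}\bar Z=\overline{\nabla_Y Z}$ is a lift from $C_i$, whence a second application gives $\bar\nabla_{\bar X}\overline{\nabla_Y Z}=\overline{\nabla_X\nabla_Y Z}$; the second term is symmetric, yielding $\overline{\nabla_Y\nabla_X Z}$. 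For the third, $[\bar X,\bar Y]=\overline{[X,Y]}$ is itself a lift, so $\bar\nabla_{\overline{[X,Y]}}\bar Z=\overline{\nabla_{[X,Y]}Z}$. Since the lift operation is additive, collecting the bars gives
\begin{equation*}
\bar R(\bar X,\bar Y)\bar Z=\overline{\nabla_X\nabla_Y Z-\nabla_Y\nabla_X Z-\nabla_{[X,Y]}Z}=\overline{R(X,Y)Z},
\end{equation*}
which is the claim.

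There is essentially no obstacle here: the corollary is a formal consequence of Proposition \ref{covlifts}. The only point requiring a moment's care is the bookkeeping---verifying that each intermediate field remains a lift from the single cell $C_i$, so that Proposition \ref{covlifts} is legitimately applicable at every step. This is guaranteed precisely because $D_i$ is preserved both by the bracket of its lifted sections and by covariant differentiation of such sections, which is the content of the two displayed cases of Proposition \ref{covlifts}.
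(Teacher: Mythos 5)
Your proof is correct and follows exactly the route the paper intends: the paper states this as an immediate corollary of Proposition \ref{covlifts}, and your expansion of $\bar R$ via iterated covariant derivatives and the bracket compatibility $[\bar X,\bar Y]=\overline{[X,Y]}$ is precisely the formal argument being left to the reader. Your care in checking that each intermediate field remains a lift in $\Gamma(D_i)$ is the right bookkeeping point.
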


The vector fields
\begin{equation*}
 \begin{array}{l}
 \dfrac{1}{\sqrt{k}}\bar\xi_1+\ldots+\dfrac{1}{\sqrt{k}}\bar\xi_k=\bar\xi,\\[+6pt]
          \dfrac{1}{\sqrt{k}}\bar\xi_1+\ldots+\dfrac{1}{\sqrt{k}}\bar\xi_{l-1}-\dfrac{l-1}{\sqrt{k}}\bar\xi_l,\quad l=2,\ldots,k 
 \end{array}
\end{equation*}
are pairwise orthogonal. Let $(\bar\xi, u_2\ldots,u_k)$,  be the respective orthonormal (global) frame. 
Let $N=C_1-\ldots-C_k$ are sewed cells passing through $q_0\in M$. Denote by $g$ the induced metric on $N$,
$g=\bar g|_N$, $\xi=\bar\xi|_N$, $\nabla_XY$ the Levi-Civita connection on $N$, for $X$, $Y$ tangent to $N$
\begin{eqnarray*}
 \bar\nabla_X Y &=& \nabla_X Y +h(X,Y), \\
 \bar\nabla_Xu_\alpha &=& -S_\alpha X + D^\perp_Xu_\alpha,
\end{eqnarray*}
$h(X,Y)$ is the second fundamental form of $N$, $S_\alpha$ - the Weingarten operators associated to the normal frame $(u_2,..,u_k)$,
 and $D^\perp$ is the normal connection. The  frame $(u_2,\ldots,u_k)$ is globally defined hence the normal vector bundle is parallelizable. 

\begin{proposition}
 The normal connection $D^\perp$ is flat. 
\end{proposition}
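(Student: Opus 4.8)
The plan is to prove the stronger statement that the global normal frame $(u_2,\ldots,u_k)$ is actually \emph{parallel} for $D^\perp$, i.e.\ $D^\perp_X u_\alpha = 0$ for every $X$ tangent to $N$ and every $\alpha=2,\ldots,k$. Flatness is then immediate, since the normal curvature $R^\perp(X,Y)u_\alpha = D^\perp_X D^\perp_Y u_\alpha - D^\perp_Y D^\perp_X u_\alpha - D^\perp_{[X,Y]}u_\alpha$ is assembled entirely from normal covariant derivatives that all vanish. The crucial structural observation is that each $u_\alpha$ is a \emph{constant}-coefficient combination $u_\alpha=\sum_i c_i^\alpha\,\bar\xi_i$ of the lifts $\bar\xi_1,\ldots,\bar\xi_k$, so it is enough to understand $\bar\nabla_X\bar\xi_i$.

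First I would compute $\bar\nabla_X\bar\xi_i$ for an arbitrary vector field $X$ on $M$ and show it lands in $\Gamma(D_i)$. Since $\bar\nabla$ is $C^\infty(M)$-linear in its direction argument, I may split $X$ along $TM=D_1\oplus\cdots\oplus D_k$ and express each component through lifts of a local frame on the corresponding cell. Applying Proposition \ref{covlifts} termwise kills the mixed terms $\bar\nabla_{\bar E_j}\bar\xi_i$ with $j\neq i$ and turns the terms with $j=i$ into lifts of the form $\overline{\nabla_E\xi_i}$; summing gives $\bar\nabla_X\bar\xi_i\in\Gamma(D_i)$.

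Next I would use that $\bar\xi_i$ has constant unit length, $\bar g(\bar\xi_i,\bar\xi_i)=1$, whence $\bar g(\bar\nabla_X\bar\xi_i,\bar\xi_i)=0$, so $\bar\nabla_X\bar\xi_i\perp\bar\xi_i$. Because $D_i$ splits orthogonally as $\mathbb{R}\bar\xi_i\oplus(D_i\cap Im(f))$ (the complement $D_i\cap Im(f)$ being the lift of $Im(\varphi_i)$, to which $\bar\xi_i$ is orthogonal since $Ker(f)\perp Im(f)$), these two facts force $\bar\nabla_X\bar\xi_i\in\Gamma(Im(f))$. Combining with the constant coefficients, $\bar\nabla_X u_\alpha=\sum_i c_i^\alpha\,\bar\nabla_X\bar\xi_i\in\Gamma(Im(f))$. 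As $Im(f)\subset Im(f)\oplus\mathbb{R}\bar\xi=TN$, the vector field $\bar\nabla_X u_\alpha$ is tangent to $N$ whenever $X$ is; comparing with the Weingarten equation $\bar\nabla_X u_\alpha=-S_\alpha X+D^\perp_X u_\alpha$ then yields $D^\perp_X u_\alpha=0$.

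I expect the only genuine subtlety — the hard part — to be the second step: justifying carefully that $\bar\nabla_X\bar\xi_i$ remains inside $D_i$ by reducing a general direction $X$ to lifts and invoking Proposition \ref{covlifts}, rather than assuming the proposition applies to non-lift directions. Once $\bar\nabla_X\bar\xi_i\in\Gamma(D_i)$ is secured, the orthogonality to $\bar\xi_i$ together with the orthogonal decomposition of $D_i$ does the rest automatically, and flatness follows because $D^\perp$ annihilates a globally defined frame.
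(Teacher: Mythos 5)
Your proof is correct, but it takes a genuinely different route from the paper's. Both arguments establish the stronger fact that the global normal frame $(u_2,\ldots,u_k)$ is $D^\perp$-parallel, and both lean on Proposition \ref{covlifts}, but the mechanisms differ after that. The paper observes that the bilinear form $(X,Y)\mapsto\bar g(\bar\nabla_X u_\alpha,Y)$ is symmetric (because the metric dual of $u_\alpha$ is a constant-coefficient combination of the closed forms $\bar\eta_i$), uses this symmetry to trade the direction $X$ for the direction $u_\beta$, and then reduces everything to $\bar\nabla_{\bar\xi_i}\bar\xi_j=0$, whose diagonal terms $\bar\nabla_{\bar\xi_j}\bar\xi_j=\overline{\nabla_{\xi_j}\xi_j}=0$ require Proposition \ref{cellfeat}, i.e.\ that each cell's Reeb field is geodesic (a consequence of $d\eta_j=0$). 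You instead pin down where $\bar\nabla_X u_\alpha$ lives: total parallelism of the $D_i$ (which you rederive from Proposition \ref{covlifts} by tensoriality in $X$ --- the paper simply quotes this standard property of Riemannian products just before the proposition), together with $\bar g(\bar\xi_i,\bar\xi_i)=1$ and the orthogonal splitting $D_i=\mathbb{R}\bar\xi_i\oplus(D_i\cap Im(f))$, forces $\bar\nabla_X\bar\xi_i\in\Gamma(Im(f))$; hence $\bar\nabla_X u_\alpha$ is tangent to $N$ and the Weingarten equation kills $D^\perp_X u_\alpha$. What each approach buys: yours is more elementary at this point --- it needs neither the symmetry trick nor Proposition \ref{cellfeat}, only the product structure and the unit length of the $\bar\xi_i$ --- and it yields the sharper pointwise statement $\bar\nabla_X u_\alpha\in\Gamma(Im(f))$. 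The paper's computation is shorter and its symmetry trick is reused in Corollary \ref{weinxi} to obtain $S_\alpha\xi=0$; note that your route does not give $S_\alpha\xi=0$ for free, since $\bar\nabla_\xi u_\alpha$ involves the diagonal terms $\bar\nabla_{\bar\xi_i}\bar\xi_i$, which vanish only by Proposition \ref{cellfeat}.
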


\begin{proof}
It is enough to prove $\bar g(\bar\nabla_Xu_\alpha,u_\beta)=0$, $\alpha,\beta=2,\ldots,k$. Let extend $X$ to a vector field on $M$,
for $u_\alpha=u_\alpha^1\bar\xi_1+\ldots+u_\alpha^k\bar\xi_k$, $u_\alpha^j=const$, a tensor field $\bar g(\bar\nabla_Xu_\alpha,Y)$  is symmetric
\begin{equation*}
 \bar g(\bar\nabla_Xu_\alpha,u_\beta) = \bar g(\nabla_{u_\beta}u_\alpha,X) = 
\sum\limits_{i,j=1}^ku_\beta^iu_\alpha^j\bar g(\nabla_{\bar\xi_i}\bar\xi_j,X)=0,
\end{equation*}
for $\nabla_{\bar\xi_i}\bar\xi_j=0$ for $i\neq j$ and by the Propositions \ref{covlifts}, \ref{cellfeat} 
\begin{equation*}
 \bar\nabla_{\bar\xi_j}\bar\xi_j = \overline{\nabla_{\xi_j}\xi_j}=0.
\end{equation*}
 \end{proof}

\begin{corollary} 
\label{weinxi}
 \begin{equation*}
 S_\alpha X = -\bar\nabla_Xu_\alpha, \quad \alpha=2,\ldots,k,
\end{equation*}
moreover similar arguments as in the proof of the above Proposition show that
\begin{equation*}
 S_\alpha\xi=0,\quad\alpha=2,\ldots,k.
\end{equation*}
\end{corollary}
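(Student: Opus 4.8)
The plan is to read off both identities from the Weingarten equation, using what was already extracted in the proof of the preceding proposition. First I would observe that the computation there gives more than flatness of $D^\perp$: it shows $\bar g(\bar\nabla_X u_\alpha, u_\beta)=0$ for all $\alpha,\beta=2,\ldots,k$. Since the normal bundle of $N$ is spanned by the globally defined frame $(u_2,\ldots,u_k)$, the normal component of $\bar\nabla_X u_\alpha$ is
\begin{equation*}
 D^\perp_X u_\alpha = \sum_{\beta=2}^k \bar g(\bar\nabla_X u_\alpha, u_\beta)\, u_\beta = 0,
\end{equation*}
so the normal connection vanishes identically in this frame. Substituting $D^\perp_X u_\alpha=0$ into the Weingarten equation $\bar\nabla_X u_\alpha=-S_\alpha X + D^\perp_X u_\alpha$ yields $S_\alpha X=-\bar\nabla_X u_\alpha$, which is the first assertion.

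For the second identity I would evaluate the ambient derivative on the median field. Since $\xi=\bar\xi|_N$ is tangent to $N$, the first identity reads $S_\alpha\xi=-\bar\nabla_{\bar\xi}\,u_\alpha$ restricted to $N$, so it suffices to show $\bar\nabla_{\bar\xi}\,u_\alpha=0$ on all of $M$. Writing $\bar\xi=\frac{1}{\sqrt{k}}\sum_i\bar\xi_i$ and $u_\alpha=\sum_j u_\alpha^j\bar\xi_j$ with constant coefficients $u_\alpha^j$ (as recorded in the previous proof), I would expand
\begin{equation*}
 \bar\nabla_{\bar\xi}\,u_\alpha = \frac{1}{\sqrt{k}}\sum_{i,j=1}^k u_\alpha^j\,\bar\nabla_{\bar\xi_i}\bar\xi_j.
\end{equation*}
The cross terms $i\neq j$ vanish because $\bar\xi_i$, $\bar\xi_j$ are lifts from distinct cells (Proposition \ref{covlifts}), while the diagonal terms vanish since $\bar\nabla_{\bar\xi_j}\bar\xi_j=\overline{\nabla_{\xi_j}\xi_j}=0$, combining Proposition \ref{covlifts} with the geodesic property of $\xi_j$ from Proposition \ref{cellfeat}. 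Hence $\bar\nabla_{\bar\xi}\,u_\alpha=0$ on $M$, and restricting to $N$ gives $S_\alpha\xi=0$.

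There is no serious obstacle here: both identities are formal consequences of the Weingarten decomposition once the two structural facts are in place. The only point requiring care is to notice that the preceding proposition's proof actually delivers $D^\perp=0$ in the given frame, not merely the weaker statement that $D^\perp$ is flat; this is exactly what strips the normal term from the Weingarten equation and lets the shape operator be identified with the negative of the ambient derivative. The remaining input is purely the product geometry of $M$ encoded in Propositions \ref{covlifts} and \ref{cellfeat}, already at our disposal, together with the fact that each $u_\alpha$ is a \emph{constant}-coefficient combination of the framing fields $\bar\xi_i$.
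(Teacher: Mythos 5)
Your proof is correct and follows essentially the same route as the paper: the first identity comes from noting that the proposition's proof actually yields $D^\perp_X u_\alpha=0$ in the frame $(u_2,\ldots,u_k)$ (not merely flatness), which strips the normal term from the Weingarten equation, and the second identity comes from expanding $u_\alpha$ as a constant-coefficient combination of the $\bar\xi_j$ and using $\bar\nabla_{\bar\xi_i}\bar\xi_j=0$ via Propositions \ref{covlifts} and \ref{cellfeat} — exactly the ``similar arguments'' the paper alludes to. No gaps; your write-up just makes explicit what the paper leaves implicit.
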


Let $R$ be the curvature  of $N$.
\begin{proposition}
 \begin{equation}
 \bar R(X,Y)\xi = R(X,Y)\xi,
\end{equation}
that is, $R(X,Y)\xi$ is simply the restriction of $\bar R(X,Y)\xi$ to $N$. 
\end{proposition}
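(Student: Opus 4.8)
The plan is to decompose the ambient curvature vector $\bar R(X,Y)\xi$, for $X,Y$ tangent to $N$, into its parts tangent and normal to $N$, and to show separately that the tangential part is exactly the intrinsic $R(X,Y)\xi$ while the normal part vanishes. The whole argument rests on two ingredients already at hand: that $\xi$ lies in the kernel of the second fundamental form, and that the ambient curvature annihilates any pair of vectors drawn from $Ker(f)=\span(\bar\xi_1,\ldots,\bar\xi_k)$.

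First I would record that $h(X,\xi)=0$ for every $X$ tangent to $N$. Testing against the normal frame gives $\bar g(h(X,\xi),u_\alpha)=g(S_\alpha X,\xi)=g(X,S_\alpha\xi)$, which is zero by the self-adjointness of the Weingarten operators together with $S_\alpha\xi=0$ from Corollary \ref{weinxi}; since $(u_2,\ldots,u_k)$ spans the normal bundle, $h(\cdot,\xi)$ vanishes identically. Feeding $Z=\xi$ into the Gauss equation then kills every term quadratic in $h$, because each such term carries a factor $h(\cdot,\xi)$. Consequently, for all $W$ tangent to $N$ one obtains $\bar g(\bar R(X,Y)\xi,W)=g(R(X,Y)\xi,W)$; that is, the tangential component of $\bar R(X,Y)\xi$ is precisely $R(X,Y)\xi$.

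The heart of the proof is the vanishing of the normal component. Here I would use that both $\xi$ and each $u_\alpha$ are combinations of the lifts $\bar\xi_1,\ldots,\bar\xi_k$ with constant coefficients, as is manifest from the explicit orthonormal frame built out of the $\bar\xi_i$. By multilinearity $\bar R(\xi,u_\alpha)$ is then a constant-coefficient combination of the operators $\bar R(\bar\xi_i,\bar\xi_j)$, and each of these is zero: by antisymmetry when $i=j$, and by the product vanishing $\bar R(\cdot,\cdot)=0$ on $\Gamma(D_i)\times\Gamma(D_j)$ for $i\neq j$ noted earlier in Section 4. Hence $\bar R(\xi,u_\alpha)=0$ as an operator on $TM$. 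Invoking the pair symmetry of the Riemann tensor, $\bar g(\bar R(X,Y)\xi,u_\alpha)=\bar g(\bar R(\xi,u_\alpha)X,Y)=0$ for every $\alpha$, so $\bar R(X,Y)\xi$ has no normal component. Combining the two computations yields $\bar R(X,Y)\xi=R(X,Y)\xi$.

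The step I expect to be the crux is the normal-part computation, since it is the only place where the special ``diagonal'' position of $\xi$ inside the product is genuinely exploited: $\xi$ and the normals $u_\alpha$ all lie in the flat directions $Ker(f)$ along which the product curvature degenerates. The tangential part, by contrast, is a formal consequence of $S_\alpha\xi=0$ through the Gauss equation. The one point to keep honest is the constancy of the coefficients of $u_\alpha$ in the $\bar\xi_i$ basis, for it is exactly this that lets the multilinearity argument pull the curvature through to the vanishing pieces $\bar R(\bar\xi_i,\bar\xi_j)$.
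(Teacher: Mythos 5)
Your proof is correct and follows essentially the same route as the paper: the tangential part via the Gauss equation together with $S_\alpha\xi=0$, and the normal part by expanding $\xi$ and $u_\alpha$ as constant-coefficient combinations of the lifts $\bar\xi_1,\ldots,\bar\xi_k$ and using the degeneracy of the product curvature on those directions. The only cosmetic differences are that you invoke the pair symmetry of the curvature tensor to place $(\xi,u_\alpha)$ in the operator slots, reducing everything to $\bar R(\bar\xi_i,\bar\xi_j)=0$, whereas the paper keeps them in the value slots and uses $\bar g(\bar R(X,Y)\bar\xi_i,\bar\xi_j)=0$, and that you dispense with the paper's detour through the Codazzi equation by computing the normal component directly.
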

\begin{proof}
By the Gauss  equation {\cite{KobNomII}} the tangent part of $\bar R(X,Y)\xi$ is equal to (cf. Corollary \ref{weinxi})
\begin{equation}
 R(X,Y)\xi +\sum\limits_{\alpha=2}^k\left(h^\alpha(X,\xi)S_\alpha Y -h^\alpha(Y,\xi)S_\alpha X\right)=
R(X,Y)\xi,
\end{equation}
for $h^\alpha(X,\xi)=g(S_\alpha X,\xi)=g(X,S_\alpha \xi)=0$.
The normal part of $\bar R(X,Y)\xi$ (the Wiengarten equation)
\begin{equation}
 \begin{array}{l}
  (\widetilde\nabla_X h)(Y,\xi) - (\widetilde\nabla_Y h)(X,\xi) = \\
= \sum\limits_{\alpha=2}^k\lbrace (\nabla_X h^\alpha)(Y,\xi)-(\nabla_Y h^\alpha)(X,\xi)\rbrace u_\alpha \\
+ \sum\limits_{\alpha=2}^k\lbrace h^\alpha(Y,\xi)D_Xu_\alpha -h^\alpha(X,\xi)D_Yu_\alpha\rbrace,
 \end{array}
\end{equation}
again let extend $X$,$Y$,$\xi$, $u_\alpha$ to a vector fields on $M$
\begin{equation*}
\bar g(\bar R(X,Y)\bar\xi,u_\alpha)= \dfrac{1}{\sqrt{k}}\sum\limits_{i,j=1}^k u_\alpha^j\;\bar g(\bar R(X,Y)\bar\xi_i,\bar\xi_j)=0, 
\end{equation*}
and as the normal connection is flat and $D_Xu_\alpha=0$, $\alpha=2,\ldots,k$ the normal component of $\bar R(X,Y)\xi$ vanishes 
identically.
 \end{proof}

\section{Almost cosymplectic and almost $\alpha$-Kenmotsu sewed cells}
Let $M=C_1\times\ldots\times C_k$ be the product of cells and $(f,\bar g,\bar\xi_1,\ldots,\bar\xi_k)$ its 
canonical almost metric $f$-structure,  $N=C_1-\ldots-C_k$ are sewed cells,
by the construction the submanifold $N$ is $f$-invariant.  Again $\iota$ denotes the inclusion map.

\begin{theorem}
 The tensor fields $\varphi=f|_N$, $\xi=\bar\xi|_N$, $\eta=\sqrt{k}\,\iota^*\bar\eta_1$, $g=\bar g|_N$ 
define an almost contact metric structure on sewed cells $N$. Moreover
\begin{equation}
 d\eta=0, \quad \eta\wedge d\varPhi=0,
\end{equation}
where $\varPhi$ is the fundamental form of $N$.
\end{theorem}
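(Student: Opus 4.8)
The plan is to fix the pointwise splitting $T_pN=Im(f)_p\oplus\mathbb{R}\bar\xi_p$ and to deduce every identity on $N$ from the global facts already available on the product $M$. First I would write an arbitrary $X\in\Gamma(TN)$ as $X=\bar X+\tau\bar\xi$ with $\bar X\in\Gamma(Im(f))$ and $\tau$ a function, and record that by (\ref{alletas}) one has $\eta(X)=\sqrt{k}\,\bar\eta_1(X)=\tau$, so in particular $\eta(\xi)=1$. Since $\bar\xi\in Ker(f)$ forces $f\bar\xi=0$, we get $\varphi X=f\bar X\in\Gamma(Im(f))\subset\Gamma(TN)$, so $\varphi$ preserves $TN$; and using $J=f|_{Im(f)}$ with $J^2=-Id$,
\[
 \varphi^2X=f^2\bar X=-\bar X=-(X-\tau\bar\xi)=-X+\eta(X)\xi.
\]
For the metric compatibility I would take $Y=\bar Y+\sigma\bar\xi$ and use orthogonality of $Im(f)$ and $Ker(f)$ together with $\bar g(\bar\xi,\bar\xi)=1$ to get $g(X,Y)=\bar g(\bar X,\bar Y)+\eta(X)\eta(Y)$, while skew-symmetry of $f$ and $J^2=-Id$ give $g(\varphi X,\varphi Y)=\bar g(f\bar X,f\bar Y)=\bar g(\bar X,\bar Y)$; subtracting yields the required relation. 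Thus $(\varphi,\xi,\eta,g)$ is almost contact metric.

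Next, the closedness $d\eta=0$ is immediate: $d\bar\eta_1=\pi_1^*d\eta_1=0$ because $C_1$ is a cell, and $d$ commutes with $\iota^*$, so $d\eta=\sqrt{k}\,\iota^*d\bar\eta_1=0$. For the fundamental form I would first observe that for $X,Y\in\Gamma(TN)$ one has $\varPhi(X,Y)=g(X,\varphi Y)=\bar g(X,fY)=\bar\varPhi(X,Y)$, hence $\varPhi=\iota^*\bar\varPhi$ and $d\varPhi=\iota^*d\bar\varPhi$. Since $\bar\varPhi=\sum_i\pi_i^*\varPhi_i$, this gives $d\bar\varPhi=\sum_i\pi_i^*d\varPhi_i$. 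Now each cell $C_i$ is three-dimensional, so $\eta_i\wedge\varPhi_i$ spans the one-dimensional space of $3$-forms and, exactly as noted in the proof of Proposition \ref{cellfeat}, $d\varPhi_i=\alpha_i\,\eta_i\wedge\varPhi_i$ for a function $\alpha_i$ on $C_i$; pulling back, $\pi_i^*d\varPhi_i=\bar\alpha_i\,\bar\eta_i\wedge\pi_i^*\varPhi_i$ with $\bar\alpha_i=\alpha_i\circ\pi_i$.

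The decisive step is then to restrict to $N$. By (\ref{alletas}) every section $X=\bar X+\tau\bar\xi$ of $TN$ satisfies $\bar\eta_1(X)=\cdots=\bar\eta_k(X)=\tau/\sqrt{k}$, so all the pulled-back one-forms collapse to one and the same form, $\iota^*\bar\eta_1=\cdots=\iota^*\bar\eta_k=\tfrac{1}{\sqrt{k}}\,\eta$. Consequently
\[
 d\varPhi=\iota^*d\bar\varPhi=\frac{1}{\sqrt{k}}\,\eta\wedge\Big(\sum_{i=1}^k(\alpha_i\circ\pi_i\circ\iota)\,\iota^*\pi_i^*\varPhi_i\Big),
\]
and wedging once more with $\eta$ gives $\eta\wedge d\varPhi=0$, since $\eta\wedge\eta=0$.

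I expect the last paragraph to be the real content: the verification of the almost contact metric axioms and of $d\eta=0$ is routine bookkeeping in the splitting, but the identity $\eta\wedge d\varPhi=0$ depends entirely on the defining property of a manifold of sewed cells. It is the fact that $N$ is an integral submanifold of $Im(f)\oplus\mathbb{R}\bar\xi$ — where (\ref{alletas}) holds — that forces all the $\iota^*\bar\eta_i$ to coincide, so that the separate $\eta_i$-factors of the various $d\varPhi_i$ can be extracted as a single common factor $\eta$. The complementary ingredient, without which the argument would not close, is the three-dimensionality of the cells, which guarantees the pointwise proportionality $d\varPhi_i=\alpha_i\,\eta_i\wedge\varPhi_i$ in the first place.
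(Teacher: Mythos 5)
Your proposal is correct and follows essentially the same route as the paper: both hinge on $\varPhi=\iota^*\bar\varPhi$, the three-dimensionality of each cell giving $d\varPhi_i=\alpha_i\,\eta_i\wedge\varPhi_i$, and the collapse $\iota^*\bar\eta_1=\cdots=\iota^*\bar\eta_k=\frac{1}{\sqrt{k}}\,\eta$ from (\ref{alletas}), which lets the common factor $\eta$ be pulled out so that $\eta\wedge d\varPhi=0$. The only difference is cosmetic, in the routine part: you verify the almost contact metric axioms through the splitting $X=\bar X+\tau\bar\xi$ and $J^2=-Id$ on $Im(f)$, while the paper instead shows $Ker(\varphi)=\mathbb{R}\xi$ and invokes $\varphi^3+\varphi=0$; both arguments are sound.
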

\begin{proof}
At first 
\begin{equation}
g( X,\varphi Y)= \bar g(\iota_* X,\iota_*(\varphi Y)) = \bar g(\iota_*X,f\iota_*( Y)), 
\end{equation}
and $\varPhi = \iota^*\bar\varPhi$, similarly we prove $\varphi\xi=0$, 
\begin{equation}
 d\varPhi = \iota^*(d\bar\varPhi)=\iota^*\lbrace \sum\limits_{i=1}^k\pi_i^*d\varPhi_i\rbrace,
\end{equation}
 $\varPhi_i$ are the fundamental forms of cells. Each cell is a 3-dimensional manifold hence
$d\varPhi_i=2\lambda_i\eta_i\wedge\varPhi_i$ for a function $\lambda_i$ on the cell. 
\begin{equation}
 d\varPhi = 2\iota^*\left\lbrace\sum\limits_{i=1}^k\bar\lambda_i\bar\eta_i\wedge\bar\varPhi\right\rbrace= 
2\sum\limits_{i=1}^k\iota^*(\bar\eta_i)\wedge\iota^*(\bar\lambda_i\bar\varPhi_i),
\end{equation}
$\bar\lambda_i = \lambda_i\circ\pi_i$, from $\iota^*\bar\eta_1=\ldots=\iota^*\bar\eta_k=\frac{1}{\sqrt{k}}\,\eta$ (eq. \ref{alletas}) it follows
\begin{equation}
 \eta\wedge d\varPhi=\dfrac{2}{\sqrt{k}}\,\eta\wedge\eta\wedge\iota^*\left\lbrace\sum\limits_{i=1}^k\bar\lambda_i\bar\varPhi_i\right\rbrace= 0,
\end{equation}
for the further reference the form in the curly brackets we denote as $\varPhi'$.

If $\varphi X=0$ for $X$ tangent to $N$ then 
$f\iota_*(X)=0$ hence $\iota_* (X)= \alpha_j\bar\xi_j$ for real numbers $\alpha_j$, however $\bar\eta_1(\iota_*(X))=\ldots=\bar\eta_k(\iota_*(X))$
implies $\alpha_1=\ldots=\alpha_k=\alpha$ and $X=\alpha\xi$, $\varphi^3+\varphi=0=\varphi(\varphi^2+Id)$, so 
$\varphi^2 X + X=\alpha(X)\xi$ and $g(X,\xi)=\alpha(X)=\eta(X)$.
\end{proof}

The function $\lambda_i$ we will call a weight of a cell. The family $(\lambda_1,\lambda_2,\ldots,\lambda_k)$ the common weight of sewed cells. 
The next theorem is an almost a direct consequence of the above statement.
\begin{theorem}
 The sewed almost cosymplectic cells is an almost cosymplectic manifold, the sewed almost $\alpha_0$-Kenmotsu cells  is 
an almost $(\alpha_0/\sqrt{k})$-Kenmotsu manifold.
\end{theorem}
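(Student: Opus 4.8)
The plan is to read everything off the preceding theorem, where it was already shown that on the sewed cells $N$ one has $d\eta=0$ unconditionally, together with
\[
d\varPhi=\frac{2}{\sqrt{k}}\,\eta\wedge\varPhi',
\qquad
\varPhi'=\iota^*\Bigl\{\sum_{i=1}^k\bar\lambda_i\bar\varPhi_i\Bigr\},
\]
where $\bar\lambda_i=\lambda_i\circ\pi_i$ and $\lambda_i$ is the weight of the $i$-th cell fixed by $d\varPhi_i=2\lambda_i\eta_i\wedge\varPhi_i$. Since $d\eta=0$ holds in all cases, proving the theorem reduces to evaluating $\varPhi'$, i.e.\ to identifying the weights $\lambda_i$ in each of the two situations.

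First I would record that on a $3$-dimensional cell the $3$-form $\eta_i\wedge\varPhi_i$ is nowhere zero: it is a volume form, because on the contact distribution $\ker\eta_i$ the affinor $\varphi_i$ is a complex structure and $\varPhi_i$ is nondegenerate there (indeed $\varPhi_i(X,\varphi_i X)=-g_i(X,X)$ for $X\in\ker\eta_i$), while $\eta_i(\xi_i)=1$. Hence the relation $d\varPhi_i=2\lambda_i\eta_i\wedge\varPhi_i$ determines $\lambda_i$ uniquely as a function, and in particular $d\varPhi_i=0$ is equivalent to $\lambda_i\equiv 0$. For almost cosymplectic cells one has $d\varPhi_i=0$ for every $i$, so $\lambda_i\equiv 0$, whence $\varPhi'=0$ and $d\varPhi=0$; together with $d\eta=0$ this is precisely the almost cosymplectic condition for $N$.

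For almost $\alpha_0$-Kenmotsu cells the defining relation $d\varPhi_i=2\alpha_0\eta_i\wedge\varPhi_i$ forces, by the same nondegeneracy, $\lambda_i\equiv\alpha_0$, a single nonzero constant independent of $i$. The key step is then that $\varPhi'$ collapses to a multiple of $\varPhi$: pulling the constant $\alpha_0$ out of the sum and invoking the formula $\bar\varPhi=\sum_i\pi_i^*\varPhi_i$ from the earlier theorem,
\[
\varPhi'=\alpha_0\,\iota^*\Bigl\{\sum_{i=1}^k\bar\varPhi_i\Bigr\}=\alpha_0\,\iota^*\bar\varPhi=\alpha_0\,\varPhi,
\]
since $\varPhi=\iota^*\bar\varPhi$. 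Substituting gives $d\varPhi=2(\alpha_0/\sqrt{k})\,\eta\wedge\varPhi$, and as $\alpha_0\neq 0$ and $k\geqslant 2$ the scalar $\alpha_0/\sqrt{k}$ is a nonzero constant, so $N$ is almost $(\alpha_0/\sqrt{k})$-Kenmotsu.

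The computations are routine once the shape of $d\varPhi$ is in hand, and I expect no genuine obstacle. The one point deserving care is the identity $\varPhi'=\alpha_0\varPhi$: it rests both on $\alpha_0$ being constant, so that it factors out of the weighted sum, and on the unweighted sum $\sum_i\bar\varPhi_i=\bar\varPhi$ restricting under $\iota$ to the fundamental form $\varPhi$ of $N$. It is precisely the common constant value of the weights across all cells that permits this collapse; with merely equal but non-constant weights, or with distinct constants, $\varPhi'$ would fail to be proportional to $\varPhi$ and $N$ would satisfy only the weaker relation $\eta\wedge d\varPhi=0$ established in the preceding theorem.
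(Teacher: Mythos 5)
Your proposal is correct and follows essentially the same route as the paper: both proofs reduce the statement to identifying the common weight $(\lambda_1,\ldots,\lambda_k)$ and evaluating $\varPhi'$, getting $\varPhi'=0$ in the cosymplectic case and $\varPhi'=\alpha_0\varPhi$ in the $\alpha_0$-Kenmotsu case, then substituting into $d\varPhi=\frac{2}{\sqrt{k}}\,\eta\wedge\varPhi'$. You merely make explicit two points the paper leaves implicit -- the nondegeneracy of $\eta_i\wedge\varPhi_i$ (hence uniqueness of the weights) and the collapse $\iota^*\bigl\{\sum_i\bar\varPhi_i\bigr\}=\varPhi$ -- which is a welcome but not substantively different elaboration.
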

\begin{proof}
 The common weight of the family of almost cosymplectic cells is $(0,\ldots,0)\in \mathbb{R}^k$, the form $\varPhi'=0$. If we have family 
of an almost $\alpha$-Kenmotsu cells  with the same weight, 
$d\varPhi_i=2\alpha_i\eta_i\varPhi_i$, $\alpha_1=\ldots=\alpha_k=\alpha_0$, so the common weight is $(\alpha_0,\ldots,\alpha_0)\in \mathbb{R}^k$, then
$\varPhi'=\alpha_0\varPhi$. The sewed cells in this case satisfy $d\varPhi=\dfrac{2\alpha_0}{\sqrt{k}}\eta\wedge\varPhi$ - it is 
an almost $(\alpha_0/\sqrt{k})$-Kenmotsu manifold. 
\end{proof}

\begin{proposition}  
\label{cellsaux}
Assume that each cell $C_i$, $i=1,\ldots,k$, satisfies the $(\kappa,\mu,\mu')$-nullity 
condition
\begin{eqnarray*}
 R_i(X,Y)\xi_i &=& \kappa_i(\eta_i(Y)X-\eta_i(X)Y)+\mu_i(\eta_i(Y)h_iX-\eta_i(X)h_iY)+ \\ 
         && + \mu_i'(\eta_i(Y)h'_iX-\eta_i(X)h'_iY), \quad i=1,\ldots,k,
\end{eqnarray*}
$(\kappa_i,\mu_i,\mu'_i)\in\mathbb{R}^3$, $i=1,\ldots,k$.
Then the sewed cells $N=C_1-\ldots-C_k$ satisfies the following condition
\begin{eqnarray*}
 R(X,Y)\xi &=& (\eta(Y)PX-\eta(X)PY) +(\eta(Y)H_1X-\eta(X)H_1Y)+ \\[+4pt]
 && +(\eta(Y)H_2X-\eta(X)H_2Y),\\
\end{eqnarray*}
and the affinors $P$, $H_1$, $H_2$ satisfy the following commutations relations
\begin{equation*}
\begin{array}{l}
g(PX,Y)=g(X,PY), \quad g(H_iX,Y)=g(X,H_iY), \quad i=1,2,  \\[+4pt]
 P\varphi=\varphi P,\quad H_i\varphi+\varphi H_i=0, \quad PH_i=H_iP,\quad i=1,2, \\[+4pt]
H_1\xi=H_2\xi=P\xi =0.
\end{array}
\end{equation*}
\end{proposition}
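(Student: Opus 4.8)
The plan is to pull the entire computation back to the individual cells, insert the per-cell nullity conditions, and then read off $P$, $H_1$, $H_2$ as operators that act cellwise on the orthogonal splitting $TM=D_1\oplus\dots\oplus D_k$. First I fix $X,Y$ tangent to $N$ and decompose $\iota_*X=\sum_i\bar X_i$, $\iota_*Y=\sum_i\bar Y_i$ along the $D_i$, where $\bar X_i,\bar Y_i$ are the lifts of $X_i=\pi_{i*}\iota_*X$, $Y_i=\pi_{i*}\iota_*Y$. Since $\xi=\bar\xi=\frac{1}{\sqrt{k}}\sum_i\bar\xi_i$ and $R(X,Y)\xi=\bar R(X,Y)\xi$, I get $R(X,Y)\xi=\frac{1}{\sqrt{k}}\sum_i\bar R(\iota_*X,\iota_*Y)\bar\xi_i$. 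Because $\bar R$ annihilates every mixed pair and every factor acts only on its own $D_i$, of all the blocks $\bar R(\bar X_l,\bar Y_m)\bar\xi_i$ only $l=m=i$ survives, so Corollary \ref{curvlifts} gives $\bar R(\iota_*X,\iota_*Y)\bar\xi_i=\overline{R_i(X_i,Y_i)\xi_i}$. Substituting the nullity condition of $C_i$ and using $\eta_i(X_i)=\bar\eta_i(\iota_*X)=\eta(X)/\sqrt{k}$ (from (\ref{alletas}) together with $\eta=\sqrt{k}\,\iota^*\bar\eta_1$) I arrive at
\begin{equation*}
R(X,Y)\xi=\frac{1}{k}\sum_{i=1}^k\Big(\eta(Y)\,(\kappa_i\bar X_i+\mu_i\overline{h_iX_i}+\mu_i'\overline{h_i'X_i})-\eta(X)\,(\kappa_i\bar Y_i+\mu_i\overline{h_iY_i}+\mu_i'\overline{h_i'Y_i})\Big).
\end{equation*}

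The delicate point, which I expect to be the main obstacle, is that the naive cellwise operator $\bar X_i\mapsto\frac{\kappa_i}{k}\bar X_i$ does \emph{not} preserve $TN$ once the $\kappa_i$ differ, since it destroys the equality $\bar\eta_1=\dots=\bar\eta_k$ that characterizes tangency to $N$ (eq. (\ref{alletas})). The resolution is to observe that the $\bar\xi_i$-parts drop out of the antisymmetric combination: writing $\bar X_i=\bar X_i^h+\frac{\eta(X)}{\sqrt{k}}\bar\xi_i$ with $\bar X_i^h=-f^2\bar X_i\in\im f\cap D_i$ the $\varphi_i$-horizontal part, the two terms $\frac{\eta(X)\eta(Y)}{\sqrt{k}}\bar\xi_i$ produced by $\eta(Y)\bar X_i$ and $\eta(X)\bar Y_i$ cancel, whence $\eta(Y)\bar X_i-\eta(X)\bar Y_i=\eta(Y)\bar X_i^h-\eta(X)\bar Y_i^h$. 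Moreover $\overline{h_iX_i}$ and $\overline{h_i'X_i}$ are automatically horizontal, because $\eta_i(h_iX_i)=g_i(X_i,h_i\xi_i)=0$ and $\eta_i(h_i'X_i)=g_i(\varphi_iX_i,h_i\xi_i)=0$ by $h_i\xi_i=0$. I therefore define, block-diagonally for $TM=\oplus_iD_i$, the operators $\tilde P|_{D_i}=\frac{\kappa_i}{k}(-\bar\varphi_i^2)$, $\tilde H_1|_{D_i}=\frac{\mu_i}{k}\bar h_i$, $\tilde H_2|_{D_i}=\frac{\mu_i'}{k}\bar h_i'$ (with $\bar h_i,\bar h_i'$ the lifts of $h_i,h_i'$ and zero on the other factors); all three land in $\im f\subset TN$, so setting $P=\tilde P|_{TN}$, $H_1=\tilde H_1|_{TN}$, $H_2=\tilde H_2|_{TN}$ gives exactly $R(X,Y)\xi=\eta(Y)PX-\eta(X)PY+\eta(Y)H_1X-\eta(X)H_1Y+\eta(Y)H_2X-\eta(X)H_2Y$.

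It then remains to verify the algebraic relations, and here everything reduces to the block structure together with the standard cell identities $h_i\xi_i=0$, $\eta_i\circ\varphi_i=0$, $\varphi_i\xi_i=0$, and $h_i\varphi_i=-\varphi_ih_i$ (hence also $h_i'\varphi_i=-\varphi_ih_i'$). Symmetry of $P,H_1,H_2$ follows since each block is $g_i$-symmetric ($-\bar\varphi_i^2$ is the orthogonal projection onto the horizontal space, and $h_i,h_i'$ are symmetric) and the blocks respect the orthogonal splitting, so restricting to $TN$ preserves symmetry. The identity $P\varphi=\varphi P$ holds because on $D_i$ the operator $\frac{\kappa_i}{k}(-\bar\varphi_i^2)$ commutes with $\bar\varphi_i=f$, while $\tilde H_j f=-f\tilde H_j$ because $h_i,h_i'$ anticommute with $\varphi_i$; as $f$ preserves $TN$ and all three operators land in $TN$, these pass to $P\varphi=\varphi P$ and $H_j\varphi+\varphi H_j=0$. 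The commutation $PH_j=H_jP$ follows from $(-\bar\varphi_i^2)h_i=h_i(-\bar\varphi_i^2)=h_i$ on $D_i$ (using $h_i\xi_i=0$ and $\eta_i\circ h_i=0$), so the $\kappa_i$-block commutes with the $\mu_i$- and $\mu_i'$-blocks. Finally $P\xi=H_1\xi=H_2\xi=0$ is immediate from $\iota_*\xi=\frac{1}{\sqrt{k}}\sum_i\bar\xi_i$ together with $-\bar\varphi_i^2\bar\xi_i=\bar\xi_i-\bar\xi_i=0$ and $h_i\xi_i=h_i'\xi_i=0$. The only genuinely non-routine step is the cancellation of the $\bar\xi_i$-components that makes $P$ land in $TN$ with $P\xi=0$; once that is secured, every stated identity is a blockwise verification.
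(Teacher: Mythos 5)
Your proof is correct and follows essentially the same route as the paper: you pass to the ambient product, use the block structure of the product curvature together with the lift corollary, define the very same operators (acting blockwise on $D_i$ as $\frac{\kappa_i}{k}(-\bar\varphi_i^2)$, $\frac{\mu_i}{k}\bar h_i$, $\frac{\mu_i'}{k}\bar h_i'$), observe that they kill the $\bar\xi_j$'s and hence preserve $TN$, and restrict. The ``delicate point'' you isolate --- the cancellation of the $\bar\xi_i$-components in the antisymmetric combination --- is precisely what the paper's choice of $-\bar\varphi_i^2$ (rather than the identity) in the definition of $\bar P$ encodes; you simply make explicit, along with the blockwise verification of the commutation relations, what the paper leaves implicit.
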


\begin{proof}
 Let $(M,f,g,\bar\xi_1,\ldots,\bar\xi_k)$ be the product of the cells $C_i$
\begin{equation}
 \bar R(X,Y)\bar\xi = \dfrac{1}{\sqrt{k}}\sum\limits_{i=1}^k\bar R(X,Y)\bar\xi_i,
\end{equation}
for vector fields $X$, $Y$ on $M$.  For the lifts $\bar X,\bar Y\in \Gamma(D_i)$, cf. Corollary (\ref{curvlifts})
\begin{equation}
\begin{array}{rcl}
 \bar R(\bar X,\bar Y)\bar \xi_i &=& \kappa_i \left(\bar\eta_i(\bar Y)\bar X-\bar\eta_i(\bar X)\bar Y\right) 
+\mu_i\left( \bar\eta_i(\bar Y)\bar h_i\bar X - \bar\eta_i(\bar X)\bar h_i \bar Y\right)+ \\
 && +\mu'_i\left( \bar\eta_i(\bar Y)\bar h'_i\bar X-\bar\eta_i(\bar X)\bar h'_i\bar Y\right),
\end{array}
\end{equation}
let define the tensor fields $\bar P$, $\bar H_1$, $\bar H_2$ on $M$
\begin{equation}
\label{phi}
\begin{array}{rcl}
 \bar P &=& \dfrac{1}{k}\sum\limits_{i=1}^k -\kappa_i\bar\varphi_i^2, \\
 \bar H_1 &=& \dfrac{1}{k}\sum\limits_{i=1}^k \mu_i\bar h_i, \\
 \bar H_2 &=& \dfrac{1}{k}\sum\limits_{i=1}^k \mu'_i\bar h'_i
\end{array}
\end{equation}
Notice $\bar H_1\bar\xi_j=\bar H_2\bar\xi_j=\bar P\bar\xi_j=0 $, $j=1,\ldots,k$,  thus $Im(\bar H_1)\subset Im(f)$, 
$Im(\bar H_2)\subset Im(f)$, $Im(\bar P)\subset Im(f)$ -  the manifold of sewed cells 
 is invariant with respect to $\bar P$, $\bar H_1$,  $\bar H_2$, and these tensors give rise to the properly defined 
affinors on $N$ which we shall denote $P=\bar P|_N$, $H_1=\bar H_1|_N$, $H_2=\bar H_2|_N$.

For a vectors $X,Y$ tangent to $N$, $\bar\eta_1(X)=\ldots=\bar\eta_k(X)=\dfrac{\eta(X)}{\sqrt{k}}$ 
\begin{equation}
\begin{array}{rcl}
  \bar R(X,Y)\bar\xi &=&\dfrac{1}{\sqrt{k}}\sum\limits_{i=1}^k  \kappa_i\left(\bar\eta_i(Y)(-\bar\varphi_i^2)X-\eta(X)(-\bar\varphi_i^2)Y \right) + \\ [+4pt]
  && +\,\dfrac{1}{\sqrt{k}}\sum\limits_{i=1}^k\mu_i\left( \bar\eta_i(Y)\bar h_iX-\bar\eta_i(X)\bar h_iY \right) + \\[+4pt]
  && + \,\dfrac{1}{\sqrt{k}}\sum\limits_{i=1}^k\mu'_i( \bar\eta_i(Y)\bar h'_iX-\bar\eta_i(X)\bar h'_iY)  =\\[+10pt]
  &=& \left(\eta(X)\bar PY-\eta(Y)\bar PX\right)+\left(\eta(Y)\bar H_1X-\eta(X)\bar H_1Y\right)+ \\[+4pt]
 && +\left(\eta(X)\bar H_2X-\eta(Y)\bar H_2X\right), 
 \end{array}
\end{equation}
and the restriction
\begin{equation*}
\begin{array}{rcl}
 R(X,Y)\xi &=& \bar R(X,Y)\bar\xi|_N= (\eta(Y)PX-\eta(X)PY)+ \\ 
 && + (\eta(Y)H_1X-\eta(X)H_1Y)+  (\eta(Y)H_2X-\eta(X)H_2Y).
\end{array}
\end{equation*}
Symmetries of the tensor fields $P$, $H_1$, $H_2$ are direct consequences of the symmetries of their counterparts 
$\bar P$, $\bar H_1$, $\bar H_2$, eg. 
\begin{equation*}
 \bar H_1 f = \dfrac{1}{k}\sum\limits_{i=1}^k\mu_i\bar h_i\bar\varphi_i= -f \bar H_1.
\end{equation*}

\end{proof}

\begin{theorem}
\label{cellsth2}
 Let $N=C_1-\ldots-C_k$ are are sewed cells where each $C_i$, $i=1,\ldots,k$ is a copy of a 3-dimensional almost cosymplectic 
manifold  $M^3$, res. a 3-dimensional almost $\alpha$-Kenmotsu manifold. 
If $M^3$ satisfies the $(\kappa_0,\mu_0,\mu'_0)$-nullity condition then the  manifold $N$ of sewed cells 
is an almost cosymplectic, res.  an almost $(\alpha/\sqrt{k})$-Kenmotsu 
manifold which satisfies the $(\frac{\kappa_0}{k},\frac{\mu_0}{\sqrt{k}},\frac{\mu'_0}{\sqrt{k}})$-nullity condition. 
\end{theorem}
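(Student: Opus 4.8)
The plan is to obtain this statement as the specialization of Proposition \ref{cellsaux} to the case of identical cells. Setting $\kappa_i=\kappa_0$, $\mu_i=\mu_0$, $\mu'_i=\mu'_0$ for all $i$, Proposition \ref{cellsaux} already gives
\begin{equation*}
R(X,Y)\xi = (\eta(Y)PX-\eta(X)PY)+(\eta(Y)H_1X-\eta(X)H_1Y)+(\eta(Y)H_2X-\eta(X)H_2Y),
\end{equation*}
where $P$, $H_1$, $H_2$ are the restrictions to $N$ of the ambient affinors (\ref{phi}). It therefore remains only to identify these three operators with scalar multiples of the standard nullity operators $-\varphi^2=Id-\eta\otimes\xi$, $h$ and $h'$ of the induced structure on $N$, carrying the asserted (genuinely constant) coefficients $\kappa_0/k$, $\mu_0/\sqrt k$, $\mu'_0/\sqrt k$.

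First I would treat $P$, which is pure linear algebra. With the uniform constant one has $\bar P=\frac{\kappa_0}{k}\sum_{i=1}^k(-\bar\varphi_i^2)$. Since $\varphi_i^2=-Id+\eta_i\otimes\xi_i$ on the $i$-th cell, the lift $-\bar\varphi_i^2$ acts as the orthogonal projection of $D_i$ onto the orthogonal complement of $\mathbb{R}\bar\xi_i$ in $D_i$ and vanishes on $D_j$ for $j\neq i$; hence $\sum_i(-\bar\varphi_i^2)$ is exactly the orthogonal projection of $TM$ onto $\im f$. For $X$ tangent to $N$ I would write $X=\bar X+\eta(X)\xi$ with $\bar X\in\im f$ (using $\eta(X)=\bar g(X,\bar\xi)$), so that $PX=\frac{\kappa_0}{k}(X-\eta(X)\xi)=-\frac{\kappa_0}{k}\varphi^2 X$; the $\xi$-terms then cancel in $\eta(Y)PX-\eta(X)PY$, leaving precisely $\frac{\kappa_0}{k}(\eta(Y)X-\eta(X)Y)$.

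The substantive step, and the one I expect to be the main obstacle, is identifying $H_1$, because it forces one to relate the \emph{intrinsic} structure operator $h=\frac12\mathcal L_\xi\varphi$ of $N$ to the lifted operators $\bar h_i$ in (\ref{phi}). I would compute $\mathcal L_{\bar\xi}f$ on the ambient product: since the flow of $\bar\xi_i$ moves only the $i$-th factor while $\bar\varphi_j$ is built from and acts within the $j$-th factor, $\mathcal L_{\bar\xi_i}\bar\varphi_j=0$ for $i\neq j$, whereas $\mathcal L_{\bar\xi_i}\bar\varphi_i=\overline{\mathcal L_{\xi_i}\varphi_i}=2\bar h_i$ because lifts intertwine Lie brackets (as already exploited in Proposition \ref{covlifts}). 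With $\bar\xi=\frac{1}{\sqrt k}\sum_i\bar\xi_i$ and $f=\sum_j\bar\varphi_j$ this yields $\frac12\mathcal L_{\bar\xi}f=\frac{1}{\sqrt k}\sum_i\bar h_i$. As $\bar\xi$ is tangent to $N$ and $N$ is $f$-invariant, restricting this ambient derivative to $TN$ reproduces the intrinsic $h$, so $h=\frac{1}{\sqrt k}\sum_i\bar h_i|_N$; comparing with $\bar H_1=\frac{\mu_0}{k}\sum_i\bar h_i$ gives $H_1=\frac{\mu_0}{\sqrt k}\,h$.

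The operator $H_2$ I would then handle in the same spirit: using $\bar h_i\bar\varphi_j=0$ for $i\neq j$ and $\bar h_i\bar\varphi_i=\bar h'_i$, the product $\big(\sum_i\bar h_i\big)\big(\sum_j\bar\varphi_j\big)$ collapses to $\sum_i\bar h'_i$, whence $h'=h\varphi=\frac{1}{\sqrt k}\sum_i\bar h'_i|_N$ and $H_2=\frac{\mu'_0}{\sqrt k}\,h'$. Substituting the three identifications into the displayed formula produces exactly the $\big(\frac{\kappa_0}{k},\frac{\mu_0}{\sqrt k},\frac{\mu'_0}{\sqrt k}\big)$-nullity condition. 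Finally, that $N$ is almost cosymplectic (respectively almost $(\alpha/\sqrt k)$-Kenmotsu) needs no new argument: it follows at once from the preceding theorem on sewed almost cosymplectic and almost $\alpha$-Kenmotsu cells, since all cells here share the same weight.
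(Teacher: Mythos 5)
Your proposal is correct and follows essentially the same route as the paper: specialize Proposition \ref{cellsaux} to equal constants, compute $\mathcal{L}_{\bar\xi}f=\frac{2}{\sqrt{k}}\sum_i\bar h_i$ on the ambient product, and use the fact that this Lie derivative restricts to the intrinsic $\mathcal{L}_\xi\varphi$ on $N$ (the paper isolates exactly this restriction step as a separate Lemma, proved by extending tangent fields into $\Gamma(\im f)$ and comparing brackets, which is the argument you sketch via tangency of $\bar\xi$ and $f$-invariance of $N$). Your handling of $\bar P$ as a projection and of $H_2$ by collapsing $\bigl(\sum_i\bar h_i\bigr)\bigl(\sum_j\bar\varphi_j\bigr)=\sum_i\bar h'_i$ matches the paper's identities $\bar P=-\frac{\kappa_0}{k}f^2$ and $\bar H_2=\frac{\mu'_0}{2\sqrt{k}}(\mathcal{L}_{\bar\xi}f)f$ in all but notation.
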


\begin{lemma}
\begin{equation*}
 \mathcal{L}_{\bar\xi}f|_N=\mathcal{L}_\xi\varphi, \quad (\mathcal{L}_{\bar\xi}f)f|_N=(\mathcal{L}_\xi\varphi)\varphi.
\end{equation*}
\end{lemma}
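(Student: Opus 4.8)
The plan is to deduce both equalities from a single principle — naturality of the Lie derivative of a $(1,1)$-tensor under the flow of the field along which one differentiates — combined with two structural facts about the embedding $\iota:N\hookrightarrow M$.

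First I would isolate these two facts. Since $N$ is the integral submanifold of $Im(f)\oplus\mathbb{R}\bar\xi$ through $q_0$ (Proposition \ref{invfxi}), the median field $\bar\xi$ is a section of that distribution, hence everywhere tangent to $N$, and $\iota_*\xi=\bar\xi|_N$. Secondly, $f$ preserves $TN$: on $Im(f)$ the affinor $f$ restricts to the formal almost complex structure $J$, so $f(Im(f))\subset Im(f)$, while $f\bar\xi=\frac{1}{\sqrt k}\sum_i f\bar\xi_i=0$ because each $f\bar\xi_i=0$; thus $f(TN)\subset TN$ and $f\iota_*X=\iota_*(\varphi X)$ for every $X$ tangent to $N$.

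For the first identity I would use flows. As $\bar\xi$ is tangent to the leaf $N$, its local flow $\Phi_t$ maps $N$ into itself and restricts there to the flow $\psi_t$ of $\xi=\bar\xi|_N$. Evaluating $\Phi_t^*f$ on $X\in T_pN$, the successive operations $(d\Phi_t)_p$, $f$, $(d\Phi_{-t})$ all keep the vector tangent to $N$ (by the two facts above), so $(\Phi_t^*f)_p X=(\psi_t^*\varphi)_pX$; differentiating at $t=0$ gives $(\mathcal{L}_{\bar\xi}f)|_{TN}=\mathcal{L}_\xi\varphi$, which is the claim, and shows in passing that $(\mathcal{L}_{\bar\xi}f)X$ is tangent to $N$ whenever $X$ is. Equivalently one may compute through brackets: extending $X$ to a field $\tilde X$ tangent to $N$ along $N$, the fields $\bar\xi$ and $f\tilde X$ are $\iota$-related to $\xi$ and $\varphi X$, so naturality of the Lie bracket gives $\big([\bar\xi,f\tilde X]-f[\bar\xi,\tilde X]\big)\big|_N=\iota_*\big([\xi,\varphi X]-\varphi[\xi,X]\big)$.

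The second identity follows formally, with no further computation. For $X$ tangent to $N$ the vector $fX=\varphi X$ is again tangent to $N$, so applying the first identity to the argument $\varphi X$ yields $\big((\mathcal{L}_{\bar\xi}f)f\big)X=(\mathcal{L}_{\bar\xi}f)(\varphi X)=(\mathcal{L}_\xi\varphi)(\varphi X)=\big((\mathcal{L}_\xi\varphi)\varphi\big)X$. I expect the only real point requiring care to be the tangency bookkeeping — the joint use of $\bar\xi\in\Gamma(TN)$ and $f(TN)\subset TN$ — since once these hold, every intermediate object stays tangent to $N$ and both equalities are immediate.
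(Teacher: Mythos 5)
Your proof is correct and follows essentially the paper's own route: the paper likewise extends a vector field $X$ tangent to $N$ to a field $\tilde X$ on the product, observes that $\bar\xi$ and $f\tilde X$ are $\iota$-related to $\xi$ and $\varphi X$, and concludes $(\mathcal{L}_{\bar\xi}f)\tilde X|_N=[\xi,\varphi X]-\varphi[\xi,X]=(\mathcal{L}_\xi\varphi)X$, so your flow argument is just the integrated form of the same bracket computation (and your extension ``tangent to $N$ along $N$'' is in fact cleaner than the paper's assumption $\tilde X\in\Gamma(Im(f))$, which strictly speaking only covers arguments without a $\xi$-component). The one genuine addition is that you make explicit how the second identity follows from the first by inserting $\varphi X$ as the argument, a step the paper leaves unstated.
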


\begin{proof}(of the Lemma) Let $X$ be a local vector field tangent to $N$ defined near a point $x\in N$. Extend $X$ to 
a vector field $\tilde X$ on the product of cells, we can assume $\tilde X\in \Gamma(Im(f))$, note that $f\tilde X$ is an 
extension of $\varphi X$, so
\begin{equation*}
 [\bar\xi,f\tilde X]|_N=[\xi,\varphi X], \quad [\bar\xi,\tilde X]|_N=[\xi,X],
\end{equation*}
and
\begin{equation*}
 (\mathcal{L}_{\bar\xi}f)\tilde X|_N = [\xi,\varphi X]-\varphi [\xi,X]= (\mathcal{L}_\xi\varphi)X,
\end{equation*}
 \end{proof}

\begin{proof}(of the Theorem)
  On the ambient space of the cells product  
\begin{equation}
 (\mathcal{L}_{\bar\xi}f)X = \dfrac{1}{\sqrt{k}}\sum\limits_{i=1}^k (\mathcal{L}_{\bar\xi_i}\bar\varphi_i)=
  \dfrac{2}{\sqrt{k}}\sum\limits_{i=1}^k \bar h_i X,
\end{equation}
by the  Proposition (\ref{cellsaux})
\begin{equation}
\begin{array}{c}
 \bar P = -\dfrac{\kappa_0}{k}\displaystyle\sum\limits_{i=1}^k\bar\varphi_i=-\dfrac{\kappa_0}{k}f^2,\\
\bar H_1=\dfrac{\mu_0}{k}\displaystyle\sum\limits_{i=1}^k\bar h_i= \dfrac{\mu_0}{2\sqrt{k}}\mathcal{L}_{\bar\xi}f, \\
\quad\bar H_2=\dfrac{\mu'_0}{k}\displaystyle\sum\limits_{i=1}^k\bar h'_i = \dfrac{\mu'_0}{2\sqrt{k}}(\mathcal{L}_{\bar\xi}f)f,
\end{array}
\end{equation}
$-f^2|_N=-\varphi^2=Id-\eta\otimes\xi$ and applying the Lemma
\begin{equation}
\begin{array}{l}
H_1 = \bar H_1|_N = \dfrac{\mu_0}{2\sqrt{k}}(\mathcal{L}_{\bar\xi}f)|_N= \dfrac{\mu_0}{2\sqrt{k}}\mathcal{L}_\xi\varphi=\dfrac{\mu_0}{\sqrt{k}}h, \\
H_2 = \bar H_2|_N= \dfrac{\mu'_0}{2\sqrt{k}}(\mathcal{L}_{\bar\xi}f)f|_N= \dfrac{\mu'_0}{\sqrt{k}}h',
 \end{array}
\end{equation}
again applying the  Proposition (\ref{cellsaux})
\begin{eqnarray*}
 R(X,Y)\xi &=& \dfrac{\kappa_0}{k}(\eta(Y)X-\eta(X)Y) + \dfrac{\mu_0}{\sqrt{k}}(\eta(Y)hX-\eta(X)hY)+ \\
 && + \dfrac{\mu'_0}{\sqrt{k}}(\eta(Y)h'X-\eta(X)h'Y).
\end{eqnarray*}
\end{proof}

In what will follow we will study a bit more complicated but more interesting case of the generalized $(\kappa,\mu,\mu')$-nullity 
conditions. 

The two cells $(C_1,\varphi_1,\xi_1,\eta_1,g_1)$, $(C_2,\varphi_2,\xi_2,\eta_2,g_2)$ are locally isomorphic if 
for a points $x_1\in C_1$, $x_2\in C_2$ there are neighborhoods $x_1\in U^1\subset C_1$,
$x_2\in U^2\subset C_2$ and there is a diffeomorphism $\theta:U^1\rightarrow U^2$, $\theta(x_1)=x_2$ which preserves the structures 
\begin{equation*}
 \theta_* \circ\varphi_1 = \varphi_2\circ \theta_*,\quad \xi_2=\theta_*\xi_1,\quad \eta_1=\theta^*\eta_2,\quad g_1=\theta^*g_2.
\end{equation*}

\begin{theorem}
\label{cellsth1}
Let $N=C_1-\ldots-C_k$ is a manifold of sewed, locally isomorphic cells $C_i$, $i=1,\ldots,k$. Assume that each cell
$(C_i,\varphi_i,\xi_i,\eta_i,g_i)$ satisfies a generalized $(\kappa_i,\mu_i,\mu'_i)$-nullity condition where functions 
$\kappa_i$, $\mu_i$, $\mu'_i$ are arbitrary provided $d\kappa_i\wedge\eta_i=d\mu_i\wedge\eta_i=d\mu'_i\wedge\eta_i=0$. Then 
the manifolds of sewed cells $N$ satisfies 
a generalized $(\kappa,\mu,\mu')$-nullity condition with uniquely determined functions $\kappa$, $\mu$, $\mu'$ such that 
$d\kappa\wedge\eta=d\mu\wedge\eta=d\mu'\wedge\eta=0$.
\end{theorem}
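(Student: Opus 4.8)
The plan is to reduce the generalized case to a computation formally identical to that of Theorem \ref{cellsth2}, the only genuinely new input being a reconciliation along $N$ of the cell-wise nullity functions, which is exactly what the local isomorphism hypothesis is designed to provide. Nothing in Proposition \ref{cellsaux} used that the nullity coefficients were constant, so the whole ambient machinery transfers once the coefficients are allowed to be functions.

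First I would lift the generalized conditions to the product. By Corollary \ref{curvlifts} and the fact that $\bar\eta_i=\pi_i^*\eta_i$ satisfies $\bar\eta_i(\bar Y)=\eta_i(Y)\circ\pi_i$, the nullity condition on $C_i$ lifts, for $\bar X,\bar Y\in\Gamma(D_i)$, to
\[\bar R(\bar X,\bar Y)\bar\xi_i=\bar\kappa_i\big(\bar\eta_i(\bar Y)\bar X-\bar\eta_i(\bar X)\bar Y\big)+\bar\mu_i\big(\bar\eta_i(\bar Y)\bar h_i\bar X-\bar\eta_i(\bar X)\bar h_i\bar Y\big)+\cdots,\]
with $\bar\kappa_i=\kappa_i\circ\pi_i$, $\bar\mu_i=\mu_i\circ\pi_i$, $\bar\mu_i'=\mu_i'\circ\pi_i$ (the omitted term being the $\bar\mu_i'$, $\bar h_i'$ analogue). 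Setting $\bar P=-\frac1k\sum_i\bar\kappa_i\bar\varphi_i^2$, $\bar H_1=\frac1k\sum_i\bar\mu_i\bar h_i$, $\bar H_2=\frac1k\sum_i\bar\mu_i'\bar h_i'$ and restricting to $N$ produces $R(X,Y)\xi$ in the three–term form of Proposition \ref{cellsaux}, with $P=\bar P|_N$, $H_1=\bar H_1|_N$, $H_2=\bar H_2|_N$ and all the stated symmetry and commutation relations intact.

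The heart of the argument is to show that $\bar\kappa_1,\dots,\bar\kappa_k$ agree on $N$, and likewise the $\bar\mu_i$ and $\bar\mu_i'$. I would use three facts. (i) Since $d\eta_i=0$, locally $\eta_i=d\tau_i$, and $d\kappa_i\wedge\eta_i=0$ forces $\kappa_i$ to be constant on the leaves $\{\tau_i=\mathrm{const}\}$, i.e. $\kappa_i=K_i(\tau_i)$ for a one–variable profile, and similarly $\mu_i=M_i(\tau_i)$, $\mu_i'=M_i'(\tau_i)$. (ii) A local isomorphism $\theta\colon U^i\to U^j$ preserves $\varphi,\xi,\eta,g$, hence $\nabla$, $R$, $h_i=\frac12\mathcal L_{\xi_i}\varphi_i$ and $h_i'$; pulling the nullity condition of $C_j$ back along $\theta$ and using $\theta^*\eta_j=\eta_i$ (so $\tau_j\circ\theta=\tau_i$ after base–point normalization) gives $\kappa_i=\kappa_j\circ\theta$, whence $K_i=K_j=:K$, and likewise $M_i=M_j=:M$, $M_i'=M_j'=:M'$. (iii) By the diagonal construction of Section 4 (cf. eq. \ref{alletas}) the restrictions $\bar\tau_1,\dots,\bar\tau_k$ share a common value $\sigma$ at each point of $N$. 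Hence $\bar\kappa_i|_N=K(\sigma)=:\kappa_0$, $\bar\mu_i|_N=M(\sigma)=:\mu_0$, $\bar\mu_i'|_N=M'(\sigma)=:\mu_0'$ for every $i$. With all cell–wise coefficients equal along $N$, the computation closes exactly as in Theorem \ref{cellsth2}: from $f=\sum_i\bar\varphi_i$ with $\bar\varphi_i\bar\varphi_j=0$ ($i\neq j$) one has $\sum_i\bar\varphi_i^2=f^2$ and $\sum_i\bar h_i=\frac{\sqrt k}{2}\mathcal L_{\bar\xi}f$, so $P=\frac{\kappa_0}{k}(-\varphi^2)$, and by the Lemma preceding Theorem \ref{cellsth2} ($\mathcal L_{\bar\xi}f|_N=\mathcal L_\xi\varphi=2h$, $(\mathcal L_{\bar\xi}f)f|_N=2h'$) also $H_1=\frac{\mu_0}{\sqrt k}h$, $H_2=\frac{\mu_0'}{\sqrt k}h'$. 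Thus $N$ satisfies the generalized nullity condition with $\kappa=\kappa_0/k$, $\mu=\mu_0/\sqrt k$, $\mu'=\mu_0'/\sqrt k$; since $\eta=\sqrt k\,d\sigma$ on $N$ while these are functions of $\sigma$ alone, each differential is a multiple of $\eta$, giving $d\kappa\wedge\eta=d\mu\wedge\eta=d\mu'\wedge\eta=0$, and the explicit formulas exhibit the functions as uniquely determined.

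I expect the main obstacle to be step (ii): justifying that the nullity functions are genuine structural invariants transforming by pullback under an isomorphism. This rests on uniqueness of the $(\kappa,\mu,\mu')$–representation, which holds where $h_i\neq0$ — there $\mathrm{Id}$, $h_i$, $h_i'$ are pointwise independent on the contact distribution — but degenerates on the locus $h_i=0$, where the $\mu,\mu'$–terms drop out. On that locus $\kappa_i$ is still pinned down, while $\mu_i,\mu_i'$ must be fixed by continuity from the open set $\{h_i\neq0\}$; handling this extension cleanly is the one delicate point. Everything else is either a verbatim transcription of Theorem \ref{cellsth2} or the elementary diagonal geometry already established in Section 4.
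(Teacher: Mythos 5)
Your proposal is correct and takes essentially the same route as the paper: the paper's own proof consists of a lemma establishing exactly your steps (i)--(iii) (one-variable profiles $\kappa_i=u_1\circ\tau_i$ from $d\kappa_i\wedge\eta_i=0$ and $d\eta_i=0$, the invariance $\kappa_i=\kappa_j\circ\theta$ under the local isomorphism, and equality of the $\bar\tau_i$ --- hence of all the lifted coefficients --- along the diagonal $N$), followed by the observation that Proposition \ref{cellsaux} holds verbatim with function coefficients, the computation of Theorem \ref{cellsth2}, and $d\kappa\wedge\eta=0$ obtained from $d\bar\kappa_i=u_1'\,\bar\eta_i$ exactly as you argue. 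The one delicate point you flag --- that $\kappa_i=\kappa_j\circ\theta$ presupposes the nullity coefficients are genuine invariants, which requires uniqueness of the $(\kappa,\mu,\mu')$-representation and hence care on the locus $h_i=0$ --- is not treated in the paper either, which simply asserts that the functions are ``scalar invariants,'' so on this point your write-up is, if anything, more candid than the original.
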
 

\begin{lemma}
 Let $q_0\in N \subset M=C_1\times\ldots\times C_k$. There is a polidisc $(U_0,\bar\tau_1,\ldots,\bar\tau_k)$ centered at $q_0$, a non-empty interval $(-\epsilon,\epsilon)$, and there are functions $u_1, u_2, u_3:(-\epsilon,\epsilon)\rightarrow \mathbb{R}$, such that 
\begin{equation}
\begin{array}{l}
\bar\tau_1(U_0)=\ldots=\bar\tau_k(U_0)=(-\epsilon,\epsilon), \quad \\
\bar\kappa_i=u_1\circ\bar\tau_i,\quad \bar\mu_i=u_2\circ\bar\tau_i, 
\quad \bar\mu'_0=u_3\circ\bar\tau_i,\quad i=1,\ldots,k,
\end{array}
\end{equation}
 where $\bar\kappa_i=\kappa_i\circ\pi_i$, $\bar\mu_i=\mu_i\circ\pi_i$, $\bar\mu'_i=\mu'_i\circ\pi_i$, $i=1,\ldots,k$,
\end{lemma}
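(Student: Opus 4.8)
The plan is to produce the common profile functions $u_1,u_2,u_3$ in two stages: first express each nullity function as a function of the transverse potential $\tau_i$ within a single cell, and then invoke the local isomorphisms to show that these profiles are independent of the cell index.

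First I would exploit $d\eta_i=0$ together with the polidisc already set up before the statement: on $U_0^i$ there is a potential $\tau_i$ with $\eta_i=d\tau_i$ and $\tau_i(U_0^i)=(-\epsilon_i,\epsilon_i)$. The hypothesis $d\kappa_i\wedge\eta_i=0$ then reads $d\kappa_i\wedge d\tau_i=0$, and since $d\tau_i=\eta_i$ never vanishes this says precisely that $\kappa_i$ is constant along the level sets of $\tau_i$. By the standard flow-box first-integral argument, after shrinking the disc so that level sets are connected, $\kappa_i$ descends to a smooth function of its potential, $\kappa_i=v_{1,i}\circ\tau_i$; the same reasoning gives $\mu_i=v_{2,i}\circ\tau_i$ and $\mu'_i=v_{3,i}\circ\tau_i$.

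Next I would use the local isomorphism hypothesis to identify the profiles $v_{\cdot,i}$ across cells. Fix a structure-preserving diffeomorphism $\theta\colon U^i\to U^j$. Since $\theta$ is an isometry carrying $\varphi_i,\xi_i,\eta_i$ to $\varphi_j,\xi_j,\eta_j$, it preserves the Levi-Civita connection, hence the curvature $R$, and it preserves $h=\frac12\mathcal L_\xi\varphi$ and $h'=h\varphi$. Consequently $\theta$ transports the $(\kappa_i,\mu_i,\mu'_i)$-nullity equation on $C_i$ to a nullity equation on $C_j$, which matched against the prescribed condition on $C_j$ yields $\kappa_j\circ\theta=\kappa_i$ and likewise for $\mu,\mu'$. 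On the other hand $\eta_i=\theta^*\eta_j$ gives $d(\tau_j\circ\theta)=d\tau_i$, so after normalizing base points ($\theta(x_i)=x_j$, $\tau_i(x_i)=\tau_j(x_j)=0$) one has $\tau_j\circ\theta=\tau_i$. Combining these, $v_{1,i}\circ\tau_i=\kappa_i=\kappa_j\circ\theta=v_{1,j}\circ\tau_i$, and since $\tau_i$ is surjective onto the interval I conclude $v_{1,i}=v_{1,j}$; iterating over all indices produces common profiles $u_1,u_2,u_3$. Pulling back along $\pi_i$ and shrinking the polidisc so that every $\tau_i$ shares the image $(-\epsilon,\epsilon)$ gives $\bar\kappa_i=u_1\circ\bar\tau_i$, $\bar\mu_i=u_2\circ\bar\tau_i$, $\bar\mu'_i=u_3\circ\bar\tau_i$.

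The main obstacle I expect is the coefficient matching under $\theta$: transporting the equation only shows a priori that $C_j$ satisfies a nullity condition with coefficients $\kappa_i\circ\theta^{-1}$, and to equate these with the prescribed $\kappa_j,\mu_j,\mu'_j$ one needs uniqueness of the decomposition. This requires the three building tensors entering the nullity formula, namely $-\varphi^2$, $h$, $h'$, to be pointwise independent, which can degenerate where $h$ vanishes; handling that locus, either by a continuity argument off the degeneracy set or by working in the regime where the profiles are unambiguous, is the delicate point. The remaining bookkeeping, namely shrinking discs so that level sets are connected and the intervals $(-\epsilon_i,\epsilon_i)$ coincide, is routine.
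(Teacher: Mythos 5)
Your proposal follows essentially the same route as the paper's proof: use $d\kappa_i\wedge\eta_i=0$ to write each coefficient as a profile of the potential, $\kappa_i=u_i\circ\tau_i$; use the structure-preserving diffeomorphism $\theta$ to get both $\kappa_i=\kappa_j\circ\theta$ and $\tau_j\circ\theta=\tau_i$ (normalized at the base points); conclude $u_i=u_j$; and iterate over the pairs $(C_1,C_j)$ before intersecting the discs into one polidisc. The delicate point you flag at the end is genuine but is not handled by the paper either --- the paper simply declares $\kappa_i$, $\mu_i$, $\mu'_i$ to be ``scalar invariants,'' which is unconditionally true for $\kappa$ (it is recovered as a trace of $X\mapsto R(X,\xi)\xi$) but holds for $\mu$, $\mu'$ only off the degeneracy locus $h=0$, so your proposal is, if anything, more candid about this step than the original.
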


\begin{proof}(of the Lemma)
As cells $C_i$ and $C_j$, $i\neq j$ are locally isomorphic there are small discs $x^i_0=\pi_i(q_0)\in U^i_0\subset C_i$, 
$x^j_0=\pi_j(q_0)\in U^j_0\subset C_j$ and diffemorphism $\theta: U^i_0\rightarrow U^j_0$ which preserves the structures,
we can assume that $U^i_0$, $U^j_0$ are small enough that functions $\tau_i$, $d\tau_i=\eta_i|_{U^i_0}$, 
$\tau_j$, $d\tau_j=\eta_j|_{U^j_0}$ exist and $\tau_i(x^i_0)=\tau_j(x^j_0)=0$. From $d\kappa_i\wedge\eta_i=0$ and $d\kappa_j\wedge\eta_j=0$ 
follows that there are functions $u_i,u_j: (-\epsilon,\epsilon)\rightarrow\mathbb{R}$, $(-\epsilon,\epsilon)=\tau_i(U^i_0)=\tau_j(U^j_0)$, such that 
$$
\kappa_i=u_i\circ\tau_i,\quad \kappa_j=u_j\circ\tau_j.
$$
The functions $\kappa_i$, $\kappa_j$ are scalar invariants, i.e. 
$\kappa_i = \kappa_j\circ \theta$. Therefore 
\begin{equation*}
 \kappa_i=u_i\circ\tau_i, \quad \kappa_i=\kappa_j\circ \theta = u_j\circ \tau_j \circ\theta = u_j\circ \tau_i,
\end{equation*}
hence $u_i=u_j$. Similar arguments prove the existence of functions $v$, $w$, $\mu_i=v\circ\tau_i$, $\mu_j=v\circ\tau_j$,
$\mu'_i=w\circ\tau_i$, $\mu'_j=w\circ\tau_j$.

Now we fix $i=1$ and provide above construction for each pair $(C_1,C_j)$, $j=2,\ldots,k$. Thus we have family 
$(\tau_1,U^1_0)$,..,$(\tau_k,U^k_0)$, $\tau_i(U^i_0)=(-\epsilon,\epsilon)$, $i=1,\ldots,k$, and 
functions $u_1, u_2, u_3:(-\epsilon,\epsilon)\rightarrow\mathbb{R}$, $\kappa_i=u_1\circ\tau_i$, $\mu_i=u_2\circ\tau_i$, 
$\mu'_0=u_3\circ\tau_i$, $i=1,\ldots,k$
 and $(U_0=\bigcap\limits_{i=1}^k\pi_i^{-1}(U^i_0)$, $\bar\tau_i=\tau_i\circ\pi_i)$ is the required polidisc.
\end{proof}

\begin{proof}(of the Theorem)
Note that the thesis of the Proposition (\ref{cellsaux}) remains unchanged if we use the restriction of the functions $\bar\kappa_i$, 
$\bar\mu_i$, $\bar\mu'_0$, $i=1,\ldots,k$ to the submanifold of the sewed cells $N$. Locally the sewed cells 
are described by $\bar\tau_1(q)=\ldots=\bar\tau_k(q)$, by the Lemma 
\begin{equation}
\begin{array}{c}
 \bar\kappa_1|_N=\ldots=\bar\kappa_k|_N=\kappa,\\
 \bar\mu_1|_N=\ldots=\bar\mu_k|_N=\mu,\\
\bar\mu'_1|_N=\ldots=\bar\mu'_k|_N = \mu',
\end{array}
\end{equation}
and (cf. Theorem \ref{cellsth1}) $N$ satisfies the $(\dfrac{\kappa}{k},\dfrac{\mu}{\sqrt{k}},\dfrac{\mu'}{\sqrt{k}})$-nullity condition,
$d\bar\kappa_i= u_1' d\bar\tau_i = u_1'\bar\eta_i$, finally $d\bar\kappa|_N=d\kappa=u_1'\frac{\eta}{\sqrt{k}}$ and 
$d\kappa\wedge\eta=0$, similar for $\mu$, $\mu'$.
\end{proof}

\section{Examples and final remarks}
For the purposes of the next example  we recall the following  result {\cite{DacXi}}
\begin{theorem} 
On an almost cosymplectic manifold $(M,\varphi,\xi,\eta,g)$, $\text{\rm dim}\,M=2n+1$, $n\geqslant 2$, the vector field $\xi$ belongs to 
the $k$-nullity distribution, $k<0$, if and only if for a point $p\in M$ there exists 
a coordinate neighborhood $(U,(t,x^1,\ldots,x^{2n}))$, $p\in U$, on which
\begin{equation*}
 \begin{array}{rcl}
 \xi &=&\dfrac{\partial}{\partial t},\quad \eta=dt, \\[+4pt]
g &=& dt\otimes dt +e^{2\lambda t}\sum\limits_{\mu=1}^n dx^\mu\otimes dx^\mu +e^{-2\lambda t}\sum\limits_{\mu=1}^n dx^{n+\mu}\otimes dx^{n+\mu}, \\[+4pt]
\varphi &=& e^{2\lambda t}\sum\limits_{\mu=1}^n dx^\mu\otimes \dfrac{\partial}{\partial x^{n+\mu}} - 
          e^{-2\lambda t}\sum\limits_{\mu=1}^n dx^{n+\mu}\otimes \dfrac{\partial}{\partial x^\mu},
\end{array}
\end{equation*}
where $\lambda=\sqrt{|k|}$.
\end{theorem}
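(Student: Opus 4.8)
The plan is to prove the two implications separately; the direction from the coordinate normal form to the nullity condition is a direct verification, whereas the converse carries all the content. For the ``if'' part I would simply read off the structure from the formulas: $\varphi^2=-Id+\eta\otimes\xi$ and the metric compatibility are immediate, $d\eta=d(dt)=0$, and the fundamental form works out to $\varPhi=-\sum_{\mu}dx^\mu\wedge dx^{n+\mu}$, which has constant coefficients, so $d\varPhi=0$ and the structure is almost cosymplectic. From the metric the only Christoffel symbols that matter give $\nabla_{\partial/\partial x^\mu}\xi=\lambda\,\partial/\partial x^\mu$ for $\mu\leqslant n$ and $\nabla_{\partial/\partial x^{n+\mu}}\xi=-\lambda\,\partial/\partial x^{n+\mu}$; feeding these into $R(X,Y)\xi=(\nabla_X\nabla_Y-\nabla_Y\nabla_X-\nabla_{[X,Y]})\xi$ yields $R(X,Y)\xi=-\lambda^2(\eta(Y)X-\eta(X)Y)$, that is the $k$-nullity condition with $k=-\lambda^2=-|k|$.

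For the converse I would begin from the general theory of almost cosymplectic manifolds. Because $d\eta=0$, $\nabla_\xi\xi=0$ and $\mathcal{L}_\xi\eta=0$, there is locally a function $t$ with $\eta=dt$ and a chart in which $\xi=\partial/\partial t$. The key object is $\nabla\xi$, which one computes to be the self-adjoint field $\nabla_X\xi=h'X$, $h'=h\varphi$ (with $h$ symmetric, $h\xi=0$, $h\varphi=-\varphi h$, and $\nabla_\xi\varphi=0$). Hence $R(X,Y)\xi=(\nabla_Xh')Y-(\nabla_Yh')X$, and the hypothesis $R(X,Y)\xi=k(\eta(Y)X-\eta(X)Y)$ separates into a Codazzi equation $(\nabla_Xh')Y=(\nabla_Yh')X$ for $X,Y\perp\xi$ and, on setting $Y=\xi$, the Riccati equation $\nabla_\xi h'=k\varphi^2-{h'}^2$ along the flow of $\xi$.

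The heart of the argument is to show that $h'$ has the constant eigenvalues $0,\pm\lambda$ with $\lambda=\sqrt{|k|}$, equivalently $h^2=k\varphi^2$. Since the right-hand side $k\varphi^2-{h'}^2$ commutes with $h'$, the eigendistributions of $h'$ are parallel along $\xi$ and each eigenvalue obeys the scalar Riccati equation $\dot\ell=|k|-\ell^2$ on the contact distribution. The rigidity then comes from the pairing: as $h'\varphi=-\varphi h'$ and $\nabla_\xi\varphi=0$, whenever $\ell$ is an eigenvalue so is $-\ell$ (with eigenvector $\varphi v$), and $-\ell$ must satisfy the same equation, $\tfrac{d}{dt}(-\ell)=|k|-\ell^2$; comparing with $-\dot\ell=-(|k|-\ell^2)$ forces $\ell^2=|k|$. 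Thus the contact distribution splits orthogonally into the $n$-dimensional eigendistributions $E^{+}$ and $E^{-}$ of $h'$ for $\pm\lambda$, interchanged by $\varphi$. The Codazzi equation makes $E^{+}$ and $E^{-}$ integrable with leaves lying in the slices $t=\mathrm{const}$; straightening $E^{+}$ to $\partial/\partial x^1,\ldots,\partial/\partial x^n$ and $E^{-}$ to $\partial/\partial x^{n+1},\ldots,\partial/\partial x^{2n}$ on one slice, choosing the pairing so that $\varphi$ matches $\partial/\partial x^\mu$ with $\partial/\partial x^{n+\mu}$, and then propagating along $\xi$, the identity $\mathcal{L}_\xi g=2g(h'\,\cdot\,,\cdot)$ integrates the metric coefficients to $e^{\pm2\lambda t}$ while $\nabla_\xi\varphi=0$ carries $\varphi$ to the stated normal form.

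The main obstacle is this passage from the infinitesimal data to the explicit chart. The eigenvalue rigidity, although clean once the $\pm$-pairing is exploited, is the step that genuinely uses the constancy of $k$, and it is here that the hypothesis $n\geqslant2$ matters: in dimension three the contact distribution is only two-dimensional and the analogous nullity structure is far less rigid (indeed the abundance of three-dimensional examples is precisely what the present paper exploits). After the eigenvalues are pinned down the remainder is a Frobenius integration whose only delicate point is to fix the transversal coordinates on a single slice and align $\varphi$ across $E^{+}$ and $E^{-}$ before flowing along $\xi$, so that $g$, $\varphi$ and $\eta$ acquire the claimed form simultaneously.
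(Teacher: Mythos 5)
The paper never proves this theorem: it is recalled verbatim from \cite{DacXi} and used as a black box in Example~1, so your argument has to stand entirely on its own. Judged that way, the ``if'' direction is indeed a routine computation, and your eigenvalue rigidity step is correct and nicely done: along an integral curve of $\xi$ the eigenprojections of $h'$ are constant in a parallel frame, each eigenvalue solves $\dot\ell=|k|-\ell^2$, and since $\nabla_\xi\varphi=0$ and $h'\varphi=-\varphi h'$ the function $-\ell$ solves the same equation, forcing $\ell^2=|k|$. Note, however, that this argument is completely dimension-independent, so your aside that this is ``where $n\geqslant 2$ matters'' is off the mark.

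The genuine gap is the last step, the passage to the chart. Straightening $E^{+}$ and $E^{-}$ on a slice and flowing by $\xi$ with $\mathcal{L}_\xi g=2g(h'\,\cdot\,,\cdot)$ only produces
\begin{equation*}
g=dt\otimes dt+e^{2\lambda t}\,g^{+}_{\mu\nu}\,dx^{\mu}\otimes dx^{\nu}+e^{-2\lambda t}\,g^{-}_{\mu\nu}\,dx^{n+\mu}\otimes dx^{n+\nu},
\end{equation*}
with an a priori arbitrary slice metric, whereas the theorem asserts $g^{\pm}_{\mu\nu}=\delta_{\mu\nu}$: the slice must be a flat Riemannian product and $\varphi$ must have constant coefficients. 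Nothing you have used can deliver this. Indeed, take any two non-flat $n$-dimensional metrics $g^{\pm}$, form the doubly warped product $g=dt^{2}+e^{2\lambda t}g^{+}+e^{-2\lambda t}g^{-}$, put $\xi=\partial/\partial t$, $\eta=dt$, and define $\varphi$ by matching $g$-orthonormal frames $e^{-\lambda t}E^{+}_{\mu}\mapsto e^{\lambda t}E^{-}_{\mu}$ of the two factors. This structure satisfies every fact your proof invokes: $d\eta=0$, $\nabla_X\xi=h'X$ with $h'$ symmetric and anticommuting with $\varphi$, $\nabla_\xi\varphi=0$, constant eigenvalues $\pm\lambda$, and even the $k$-nullity condition with $k=-\lambda^{2}$ (in a doubly warped product $R(X,Y)\partial_t=0$ for all $X,Y\perp\partial_t$, whatever the fibre metrics), yet it is not isometric to the model. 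What rules it out is the one hypothesis you never use after the setup, $d\varPhi=0$, and that is precisely where $n\geqslant 2$ becomes the serious case (for $n=1$ the leaves are curves, hence trivially flat, which is why your sketch happens to close up only in the excluded dimension). Concretely, $d\varPhi=0$ says each slice, i.e.\ the Riemannian product $(L^{+}\times L^{-},g^{+}\oplus g^{-})$ with the complex structure $J=\varphi|_{\mathrm{slice}}$ interchanging the factors, is almost K\"ahler. Writing $\omega_{\mu a}=\varPhi(\partial/\partial x^{\mu},\partial/\partial x^{n+a})$, the closedness of $\varPhi$ gives, for each fixed value of the $L^{-}$ coordinates, potentials $S^{a}$ on $L^{+}$ with $dS^{a}=\omega_{\mu a}dx^{\mu}$, while compatibility of $\varphi$ with $g$ gives $(g^{+})^{\mu\nu}\omega_{\mu a}\omega_{\nu b}=g^{-}_{ab}$, which is constant on $L^{+}$; so the $S^{a}$ are coordinates whose differentials have constant Gram matrix, and $g^{+}$ (likewise $g^{-}$) is flat. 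A further short lemma (an orthogonal-matrix-valued mixed Hessian is constant) then pins $\varphi$ to the stated form. This use of $d\varPhi=0$ is the actual heart of the classification, and it is absent from your proof.
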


\begin{example} 
{\rm Let $M=C_1\times C_2$ be the product of copies of  almost cosymplectic 
cells $C_i=(\mathbb{R}^3,\varphi_i,\xi_i,\eta_i,g_i)$, $i=1,2$, 
both satisfying $\kappa$-nullity conditions with $\kappa_1=\kappa_2=-\lambda^2=\kappa_0$, 
 sewed cells is simply a hyperplane $H=\lbrace (t_1,x_1,y_1,t_2,x_2,y_2)\in M| t_1=t_2\rbrace$, and the almost contact metric structure
$(\varphi,\xi,\eta,g)$ on $H$}
\begin{equation*}
\begin{array}{rcl}
 \xi &=& \frac{1}{\sqrt{2}}\,\dfrac{\partial}{\partial s},\quad \eta=\sqrt{2}\,ds,\quad  s=t_1=t_2|_H \\[+8pt]
 g &=& 2ds\otimes ds + e^{2\lambda s}(dx_1\otimes dx_1+dx_2\otimes dx_2) + \\[+6pt]
 && + e^{-2\lambda s}(dy_1\otimes dy_1+ dy_2\otimes dy_2),\\[+6pt]
\varphi &=& e^{2\lambda s}(dx_1\otimes\dfrac{\partial}{\partial x_1}+dx_2\otimes\dfrac{\partial}{\partial x_2}) - \\
&& - e^{-2\lambda s}(dy_1\otimes\dfrac{\partial}{\partial y_1}+dy_2\otimes\dfrac{\partial}{\partial y_2})
\end{array} 
\end{equation*}
{\rm if we reparametrize $t\mapsto \frac{s}{\sqrt{2}}$  -  the obtained 
structure is almost cosymplectic and satisfies $\kappa'$-nullity condition, by the above result, and $\kappa' = -\lambda^2/2$, $\kappa'=\kappa_0/2$
.}
\end{example}

Customary in the geometry of almost $\alpha$-Kenmotsu manifolds the operator $h'$ is defined as 
\begin{equation*}
 h' = \dfrac{1}{2\alpha}(\mathcal{L}_\xi\varphi)\varphi.
\end{equation*}
Using this convention the Theorem (\ref{cellsth2}) has the form: if $C_1$,\ldots, $C_k$ are copies of an almost $\alpha$-Kenmotsu manifold $M^3$ 
and $M^3$ satisfies $(\kappa_0,\mu_0,\mu'_0)$-nullity condition then 
the manifold of sewed cells is an almost $\alpha/\sqrt{k}$-Kenmotsu manifolds which satisfies 
the $(\frac{\kappa_0}{k},\frac{\mu_0}{\sqrt{k}},\frac{\mu,_0}{k})$-nullity condition. 

\begin{example} {\cite{DilPastNulDist,Dileo1}} If $(M^{2n+1},\varphi,\xi,\eta,g)$ is an almost $\alpha$-Kenmotsu manifold such that 
 $\xi$ belongs to the $(\kappa,\mu)'$-nullity distribution ($(\kappa,\mu)'=(\kappa,0,\mu')$), and $\kappa < -\alpha^2$, then the manifold 
is locally isometric to the warped product
\begin{equation*}
 \mathbb{R}\times_f \mathbb{R}^n\times_{f'}\mathbb{R}^n,
\end{equation*}
 with warping functions 
\begin{equation*}
 f= ce^{\alpha(1+\lambda)t},\quad f'=c'e^{\alpha(1-\lambda)t}, \quad\lambda=\sqrt{-1-\frac{\kappa}{\alpha^2}}, \quad c,c'=const > 0.
\end{equation*}
here $\eta=dt$. Now in the particular case $n=1$, let take two copies $C_1$, $C_2$ of $M^3$, for simplicity
$C_1=\mathbb{R}^3\ni p=(t_1,x_1,x_2)$, $C_2=\mathbb{R}^3\ni p=(t_2,y_1,y_2)$. Then  the sewed cells $C_1-C_2$ is a hyperplane again 
$H:t_1=t_2 \subset \mathbb{R}^6=\mathbb{R}^3\times\mathbb{R}^3$, the metric
\begin{equation*}
 g=2ds\otimes ds + ce^{\alpha(1+\lambda)s}g_1+ c'e^{\alpha(1-\lambda)}g_2, \quad s:H\rightarrow \mathbb{R},\quad s=t_1|_H=t_2|_H,
\end{equation*}
 $g_i=dx_i\otimes dy_i$, $i=1,2$, again after rescaling $t=\sqrt{2}s$, $g$ is a warped product
\begin{equation*}
\mathbb{R}\times_f\mathbb{R}^2\times_f'\mathbb{R}^2,\quad
 f=ce^{\frac{\alpha}{\sqrt{2}}(1+\lambda)t}, \quad f'=c'e^{\frac{\alpha}{\sqrt{2}}(1-\lambda)t}
\end{equation*}
in consequence $(H,\varphi,\xi,\eta,g)$ satisfies $(\kappa,\mu)'$-nullity condition with 
\begin{equation*}
 \kappa = -(\frac{\alpha}{\sqrt{2}})^2(1+\lambda^2)= \dfrac{\kappa_0}{2}, \quad \mu = -2(\frac{\alpha}{\sqrt{2}})^2=\dfrac{\mu_0}{2}. 
\end{equation*}
\end{example}

\begin{example} Consider an almost contact metric manifold {\cite{Salt,Murath}} $(M^3,\varphi,\xi,\eta,g)$
\begin{equation*}
 M^3=\lbrace (x,y,z)\in \mathbb{R}^3, z >0\rbrace
\end{equation*}
an orthonormal repair $(X,\varphi X,\xi)$
\begin{equation}
 X =\dfrac{\partial}{\partial x},\quad \varphi X=\dfrac{\partial}{\partial y}, 
\quad \xi= (x -ye^{-2z})\dfrac{\partial}{\partial x}+(y-xe^{-2z})\dfrac{\partial}{\partial y}+\dfrac{\partial}{\partial z}
\end{equation}
$\eta=dz$, $M^3$ is an almost Kenmotsu manifolds which satisfies the $(\kappa,0,0)$-nullity condition with nonconstant $\kappa= -(1+e^{-4z})$.
Similar as in above Examples 
\begin{equation*}
 C_1-C_2=  \lbrace (x_1,x_2, z_1, y_1,y_2, z_2)\in \mathbb{R}^6, z_1=z_2 > 0\rbrace ,
\end{equation*}
an orthonormal frame $(X_1,\varphi X_1, X_2,\varphi X_2,\xi)$
\begin{equation}
 \begin{array}{l}
 X_1=\dfrac{\partial}{\partial x_1},\quad \varphi X_1= \dfrac{\partial}{\partial y_1}\quad 
X_2=\dfrac{\partial}{\partial x_2}, \varphi X_2=\dfrac{\partial }{\partial y_2}, \\[+10pt]
\xi = \frac{1}{\sqrt{2}}(x_1-y_1e^{-2s})\dfrac{\partial}{\partial x_1}+\frac{1}{\sqrt{2}}(x_2-y_2e^{-2s})\dfrac{\partial}{\partial x_2} + \\[+10pt]
+ \frac{1}{\sqrt{2}}(y_1-x_1e^{-2s})\dfrac{\partial}{\partial y_1} +\frac{1}{\sqrt{2}}(y_2-x_2e^{-2s})\dfrac{\partial}{\partial y_2} + 
\frac{1}{\sqrt{2}}\dfrac{\partial}{\partial s}
\end{array}
\end{equation}
where $\eta=\sqrt{2}ds$, $s=z_1|_{C_1-C_2}=z_2|_{C_1-C_2}$, the non-zero Lie brackets
\begin{equation}
 \begin{array}{c}
 [X_1,\xi]=\dfrac{1}{\sqrt{2}}X_1-\dfrac{e^{-2s}}{\sqrt{2}}\varphi X_1, \quad [\varphi X_1, \xi]=-\dfrac{e^{-2s}}{\sqrt{2}}X_1+\dfrac{1}{\sqrt{2}}\varphi X_1,\\[+10pt]
 \left[X_2,\xi\right] = \dfrac{1}{\sqrt{2}}X_2-\dfrac{e^{-2s}}{\sqrt{2}}\varphi X_2, \quad [\varphi X_2,\xi]=-\dfrac{e^{-2s}}{\sqrt{2}}X_2+\dfrac{1}{\sqrt{2}}\varphi X_2.
\end{array}
\end{equation}
If we change the frame 
\begin{equation}
\begin{array}{l}
 \bar X_1= \dfrac{X_1+\varphi X_1}{\sqrt{2}},\quad \bar X_2 = \dfrac{X_2+\varphi X_2}{\sqrt{2}}, \\[+8pt]
 \bar Y_1 = \dfrac{-X_1+\varphi X_1}{\sqrt{2}},\quad \bar Y_2 = \dfrac{-X_2+\varphi X_2}{\sqrt{2}},
\end{array}
\end{equation}
$\varphi \bar X_1 =\bar Y_1$, $\varphi \bar X_2=\bar Y_2$,  then the Lie brackets take a simpler form $i=1,2$
\begin{equation}
 \begin{array}{c}
[\bar X_i,\xi] = \dfrac{1-e^{-2s}}{\sqrt{2}}\bar X_i, \quad [\bar Y_i,\xi]= \dfrac{1+e^{-2s}}{\sqrt{2}}\bar Y_i,
\end{array}
\end{equation}
other brackets are zero, subsequently applying the Koszul's formula for the covariant derivative 
\begin{equation}
 \begin{array}{l}
 \nabla_{\bar X_i}\xi = \dfrac{1-e^{-2s}}{\sqrt{2}}\bar X_i, \quad \nabla_{\bar Y_i}\xi = \dfrac{1+e^{-2s}}{\sqrt{2}}\bar Y_i, \\[+10pt]
 \nabla_{\bar X_i}\bar X_i = -\dfrac{1-e^{-2s}}{\sqrt{2}}\xi, \quad \nabla_{\bar Y_i}\bar Y_i= -\dfrac{1+e^{-2s}}{\sqrt{2}}\xi, \\[+10pt]
 \nabla_{\xi}\bar X_i=0, \quad \nabla_{\xi}\bar Y_i=0,
\end{array}
\end{equation}
 for the curvature we obtain
\begin{equation}
 \begin{array}{rcl}
 R(\bar X_i,\xi) &=& -\nabla_\xi\nabla_{\bar X_i}\xi -\nabla_{[\bar X_i,\xi]}\xi = \\[+10pt]
            &=& -(\xi(\dfrac{1-e^{-2s}}{\sqrt{2}})+\frac{1}{2}(1-e^{-2s})^2)\bar X_i = -\dfrac{1+e^{-4s}}{2}\bar X_i, \\[+10pt]
R(\bar Y_i,\xi) &=&  -(\xi(\dfrac{1+e^{-2s}}{\sqrt{2}})+\frac{1}{2}(1+e^{-2s})^2)\bar Y_i = -\dfrac{1+e^{-4s}}{2}\bar Y_i,\\[+10pt]
&& R(\bar X_i,\bar X_j)\xi  =R(\bar X_i, \bar Y_j)\xi = R(Y_i,Y_j)\xi =0,
\end{array} 
\end{equation}
-  $C_1-C_2$ is an almost $1/\sqrt{2}$-Kenmotsu manifold which satisfies $(\kappa',0,0)$-nullity condition with the function $\kappa' = -\frac{1+e^{-4s}}{2}$,
again $\kappa'=\kappa_0/2$. 
\end{example}

Here some final remarks. The provided construction suggests to study particular submanifolds  
of almost metric $f$-manifolds with closed coframings.  
The  construction can be developed in many ways: although we take the median $\bar\xi$ as a starting point, more generally $\bar\xi$ can 
be defined by a point $c$ of the sphere  $\mathbb{S}^k \ni c=(c_1,\ldots,c_k)$, $\bar\xi= c_1\bar\xi_1+\ldots+c_k\bar\xi_k$. Cells are 3-dimensional 
manifolds, as almost contact metric  manifolds they are CR-integrable with canonically defined almost CR structure. It should be clear that sewed cells 
are also CR-integrable - which is an advantage - as we may study invariant connections  - and in the same time disadvantage - one 
cannot go beyond CR-integrable structures with this construction. One of the possible remedy is to try to sew the cells of higher dimensions, 
eg. non-CR-integrable 5-dimensional cells with 3-dimensional. Not all possible features are inherited by the sewed cells, eg. there are 3-dimensional 
examples of conformally flat almost cosymplectic manifolds {\cite{DacOlszConf}}, however  sewed such cells  are never conformally flat - 
at least they are flat and cosymplectic -  as they have K\"ahler leaves {\cite{DacOlszConf}).
Similar situation appears when one consider manifolds with a pointwise constant $\varphi$-sectional curvature: evidently each 3-dimensional 
manifold has this feature - again sewed almost cosymplectic cells are 
never manifolds of pointwise $\varphi$-sectional curvature {\cite{DacPhi}} - provided they not cosymplectic.

\end{document}